\numberwithin{equation}{section}
\renewcommand{\S}{\mathbb{S}}
\newcommand{\Z}{\mathbb{Z}}
\newcommand{\R}{\mathbb{R}}
\newcommand{\SQ}{\mathcal{Q}}
\newcommand{\IP}[2]{\left<#1,#2\right>}
\newcommand{\vn}[1]{\lVert#1\rVert}
\newcommand{\kav}{\overline{k}}
\def\ds{\displaystyle}
\newtheorem{theorem}{Theorem}[section]
\newtheorem*{theorem*}{Theorem}
\newtheorem{prop}[theorem]{Proposition}
\newtheorem{lem}[theorem]{Lemma}
\newtheorem{cor}[theorem]{Corollary}
\theoremstyle{remark}
\newtheorem{rmk}[theorem]{Remark}
\begin{document}

\title{The free elastic flow for closed planar curves}

\author[T.~Miura]{Tatsuya Miura}
\address[T.~Miura]{Department of Mathematics, Graduate School of Science, Kyoto University, Kitashirakawa Oiwake-cho, Sakyo-ku, Kyoto 606-8502, Japan}
\email{tatsuya.miura@math.kyoto-u.ac.jp}

\author[G.~Wheeler]{Glen Wheeler}
\address[G.~Wheeler]{Institute for Mathematics and its Applications \\
University of Wollongong\\
Northfields Avenue\\
Wollongong, NSW, 2522, Australia}
\email{glenw@uow.edu.au}

\subjclass[2020]{53E40 \and 53A04 \and 58J35}
\keywords{Geometric flow, free elastic flow, Willmore flow, stability}

\begin{abstract}
The free elastic flow is the $L^2$-gradient flow for Euler's elastic energy, or equivalently the Willmore flow with translation invariant initial data.
In contrast to elastic flows under length penalisation or preservation, it is more challenging to study the free elastic flow's asymptotic behaviour, and convergence for closed curves is lost.
In this paper, we nevertheless determine the asymptotic shape of the flow for initial curves that are geometrically close to circles, possibly multiply-covered, proving that an appropriate rescaling smoothly converges to a unique round circle.
\end{abstract}

\maketitle

\section{Introduction}

Let $\gamma:\S^1\rightarrow\R^2$ be a smooth immersion of $\S^1:=\R/\Z$ into the plane.
Set
\[
	E[\gamma] = \frac12\int k^2\,ds
\]
to be Euler's elastic energy.
The \emph{free elastic flow} is the steepest descent $L^2(ds)$-gradient flow for Euler's elastic energy given by the evolution equation
\begin{equation}
\label{FEF}
\tag{FEF}
\partial_t\gamma = -\Big(k_{ss} + \frac12k^3\Big)\nu\,,\qquad \gamma(\cdot,0) = \gamma_0,
\end{equation}
where $\gamma:\S^1\times[0,T)\rightarrow\R^2$ is a one-parameter family of smooth
immersed curves.
Here we use the convention that $\nu$ is the
inward-pointing unit normal and $k$ is the curvature scalar (see also Section \ref{subsec:notation}).

There is a straightforward connection between the Willmore flow for surfaces and the free elastic flow.
Let $f_0:\Sigma\rightarrow\R^3$ be a smooth immersion of a closed surface $\Sigma$.
The \emph{Willmore flow} is the steepest descent $L^2(d\mu)$-gradient flow of the Willmore functional
\[
W[f] = \frac12 \int H^2\,d\mu\,,
\]
with velocity
\begin{equation}
\label{WF}
\tag{WF}
\partial_tf = -\bigg(\Delta H + \frac12H|A^\circ|^2\bigg)\nu\,,
\end{equation}
where $f:\Sigma\times[0,T)\rightarrow\R^3$ is a one-parameter family of immersions.
Here we use $\Delta$ for the Laplace--Beltrami operator on $f(\cdot,t)$ with respect to the induced metric, $H$ for the mean curvature, $A^\circ$ for the trace-free second fundamental form, and $\nu$ again for the normal vector.
We refer the interested reader to the seminal work of Kuwert--Sch\"atzle \cite{KS1,KS2,KS3} for fundamental results on the Willmore flow of closed surfaces (see also \cite{DMSS23}).

The free elastic flow is related to the Willmore flow not of closed surfaces but of complete surfaces.
The graphical sub-case of this has been considered by Koch--Lamm \cite{KL12}.
Among other results, they show that the Willmore flow of entire graphs with small Lipschitz norm in any dimension exist globally in time.
In a very recent contribution, Li \cite{L24} shows that  the Willmore flow with small initial energy and sub-Euclidean volume growth subconverges to a plane.
Let us relate these results to our setting here.
If we assume that $\Sigma$ is a cylinder, and $f_0$ is translation-invariant, then the flow reduces to the flow of its profile curve $\gamma$, and the Willmore flow \eqref{WF} for $f$ becomes the free elastic flow \eqref{FEF} for $\gamma$.
We note that the Willmore energy of the initial surface $f_0$ is infinite, and that our flow can not be written as a graph (with small Lipschitz norm or otherwise).

Standard theory for parabolic equations gives that from any smooth initial curve the free elastic flow above exists uniquely for a short time,
and is a smooth one-parameter family of immersions.
In addition, in their celebrated study \cite{DKS02}, Dziuk--Kuwert--Sch\"atzle proved that the flow always extends globally in time.

\begin{theorem}[{\cite[Theorem 3.2]{DKS02}}]
\label{thm:global_existence}
Let $\gamma_0:\S^1\rightarrow\R^2$ be a smooth immersed curve.
Then the free elastic flow with initial data $\gamma_0$ exists uniquely for all time $t\in[0,\infty)$.
\end{theorem}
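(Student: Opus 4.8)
The plan is to combine the short-time existence and uniqueness already quoted above with a continuation argument, so that the entire task reduces to proving uniform a priori estimates on finite time intervals. By the standard parabolic theory the flow exists on a maximal interval $[0,T)$, and it suffices to show that if $T<\infty$ then the curvature together with all of its arc-length derivatives, and the length, stay controlled up to $T$; the smooth dependence on initial data then lets one restart the flow from a time just below $T$ and continue it past $T$, contradicting maximality. Hence $T=\infty$, with uniqueness inherited from the short-time theory. The first ingredient is the energy identity: since \eqref{FEF} is the $L^2(ds)$-gradient flow of $E$, differentiating gives
\[
\frac{d}{dt}E[\gamma(\cdot,t)] = -\int\Big(k_{ss}+\tfrac12 k^3\Big)^2\,ds \le 0,
\]
so that $E[\gamma(\cdot,t)]\le E[\gamma_0]$ and in particular $\int k^2\,ds\le 2E[\gamma_0]$ for all $t\in[0,T)$. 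This is the only monotone quantity at our disposal and it anchors every later estimate.

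Next I would derive the evolution equations for the line element $ds$ and for the iterated arc-length derivatives $\partial_s^m k$, being careful with the commutator $[\partial_t,\partial_s]$, which produces a factor proportional to the normal speed $V=k_{ss}+\tfrac12 k^3$. One finds that $k$ satisfies a fourth-order equation with leading part $-\partial_s^4 k$ and polynomial lower-order terms in $k,\partial_s k,\dots$, and similarly for each $\partial_s^m k$. Testing the equation for $\partial_s^m k$ against $\partial_s^m k$ and integrating by parts yields a differential inequality of the form
\[
\frac{d}{dt}\int (\partial_s^m k)^2\,ds + \int (\partial_s^{m+2}k)^2\,ds \le (\text{lower-order polynomial terms in } k).
\]
The core of the argument is to absorb the right-hand side: using the Gagliardo--Nirenberg interpolation inequalities for closed curves, each lower-order term is dominated by a small multiple of the good term $\int(\partial_s^{m+2}k)^2\,ds$ plus a factor depending only on $\int k^2\,ds$ and on the length $L$. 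Feeding in the energy bound and arguing by induction on $m$, a Gronwall estimate then bounds every $\int(\partial_s^m k)^2\,ds$ on $[0,T)$, and Sobolev embedding upgrades this to uniform $C^m$ control of $k$.

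The step I expect to be the main obstacle is controlling the length $L(t)$, precisely because the free flow carries no length penalty or constraint. Its evolution,
\[
\frac{dL}{dt} = \int kV\,ds = -\int k_s^2\,ds + \tfrac12\int k^4\,ds,
\]
has indefinite sign, so one must rule out both $L\to\infty$ and $L\to 0$ in finite time. An upper bound follows by estimating $\int k^4\,ds$ through interpolation and the energy bound; for the lower bound one can exploit that the turning number $\int k\,ds = 2\pi n$ is a flow invariant, whence $L \ge 2\pi^2 n^2/E[\gamma_0]$ whenever $n\ne 0$, the degenerate case $n=0$ being handled by a finer analysis of the differential inequality for $L$. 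Since the interpolation constants in the preceding paragraph degenerate as $L\to 0$ or $L\to\infty$, these two-sided length bounds are exactly what is needed to close the estimates, and keeping them uniform on finite intervals is the technical crux that distinguishes the free flow from its length-constrained counterparts. Once all the bounds are in place, the continuation criterion forces $T=\infty$, completing the proof.
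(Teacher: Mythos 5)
Your outline follows the same route as the cited proof of Dziuk--Kuwert--Sch\"atzle (the paper offers no proof of Theorem \ref{thm:global_existence} itself, quoting \cite[Theorem 3.2]{DKS02}): energy monotonicity, evolution equations for $\partial_s^m k$, interpolation to absorb lower-order terms into the good term, and a continuation criterion. However, one step as you describe it would fail. For the length lower bound in the turning number zero case you propose ``a finer analysis of the differential inequality for $L$''; this cannot work, since $\frac{d}{dt}L = -\int k_s^2\,ds + \frac12\int k^4\,ds$ contains no mechanism ruling out collapse --- nothing in that ODE alone distinguishes the flow from one whose length shrinks to zero in finite time. The correct (and elementary) tool, which the paper records explicitly in Section 3, is Fenchel's theorem combined with Cauchy--Schwarz: every closed immersed plane curve satisfies $2\pi \le \int |k|\,ds \le \sqrt{L}\,\vn{k}_2$, hence $L(t)E(t)\ge 2\pi^2$ and so $L(t) \ge 2\pi^2/E[\gamma_0]$ for all $t$, with no case distinction on the turning number; your separate bound $L\ge 2\pi^2 n^2/E[\gamma_0]$ for $n\neq 0$ is then a special case, not a needed one.

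You also misidentify the ``technical crux'' as the two-sided length control. In the DKS scheme the interpolation inequalities are applied in scale-invariant form, producing (as quoted in the proof of Lemma \ref{LMkssest})
\[
\frac{d}{dt}\int k_{s^\ell}^2\,ds + \int k_{s^{\ell+2}}^2\,ds \le C_\ell \vn{k}_2^{4\ell+10}
\]
with $C_\ell$ depending only on $\ell$, not on $L$. Since $\vn{k}_2^2 \le 2E[\gamma_0]$ by the gradient-flow structure, the right-hand side is uniformly bounded and each $\int k_{s^\ell}^2\,ds$ grows at most linearly in $t$ by direct integration (no Gronwall iteration is required). Negative powers of $L$ enter only when converting $L^2$ bounds into sup bounds, and there the Fenchel lower bound suffices; the constants do not degenerate as $L\to\infty$, so no upper length bound is needed for continuation. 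A linear-in-$t$ upper bound does hold (\cite[(3.11)]{DKS02}, cf.\ \eqref{eq:length_sublinear}), but it plays no role in the global existence proof. With these two corrections --- Fenchel in place of the ODE analysis, and the scale-invariant form of the interpolation estimates --- your argument closes along exactly the lines of the cited reference.
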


In fact, Dziuk--Kuwert--Sch\"atzle proved global existence not only for the free elastic flow but also for the length-penalised and the length-preserving elastic flow for closed curves.
They further proved that the latter two flows always converge to stationary solutions (elasticae) in a certain sense.
Since then convergence results are very well-studied for these constrained or penalised flows \cite{MP21}, including in similar situations \cite{DLLPS18,DLP17,DLP19,DPS16,DS17,L12,LLS15,MS22,NO17,Polden1996,Pozzetta22}.

In contrast to the wealth of results under length penalisation or preservation, nothing has been known about the asymptotic shape of the free elastic flow for closed planar curves, even though the free elastic flow would be the most natural $L^2$-gradient flow for Euler's elastic energy.
To gain some understanding, it is helpful to think of some special solutions (see Section \ref{subsec:special} for details).

\begin{enumerate}
    \item (Stationary solutions) There are \emph{no} stationary solutions of closed curves.
    \item (Expanding solutions)
    Circles expand with radius $\rho(t) = (\rho(0)^4+2t)^{\frac14}$.
    Moreover, the lemniscate of Bernoulli also expands self-similarly \cite{EGMWW15}, again with length $\sim t^{\frac14}$.
\end{enumerate}

Even among open curves, only stationary solutions are added to the list: straight lines and rectangular elasticae. It is not known whether or not there are further non-trivial special solutions, in particular translators.
In the closed case, the known solutions already beg the stability question: If a free elastic flow is close to a multiply-covered circle in a scale-invariant sense, is it asymptotic to a multiply-covered circle?
Our main result answers this question in the positive.
We emphasise that the stability of multiply-covered circles is in general a delicate issue.

We also mention that the free elastic flow is previously studied in \cite{WW17} for open curves subject to a free boundary condition on parallel lines.
This class admits stationary solutions such as straight lines, thus being significantly different from our setting.
In \cite[Theorem 1.4]{WW17} stability of straight segments is obtained.

\subsection{Main result}

Let $L$ denote the length functional, and set $L(t):=L[\gamma(\cdot,t)]$.

\begin{theorem}[Geometric stability of $\omega$-circles]
\label{thm:stability}
    There exists a positive constant $\varepsilon=\varepsilon(\omega)>0$ with the following property:
    Let $\gamma_0:\S^1\rightarrow\R^2$ be a smooth immersed curve with (absolute) turning number $\omega=|\frac{1}{2\pi}\int_{\gamma_0}k\,ds|$ such that
    \begin{equation}
    \label{EQgeomsmallintro}
    L[\gamma_0]^3\int_{\gamma_0} k_s^2\,ds < 
    \varepsilon\,.
    \end{equation}
    Then the free elastic flow $\gamma:\S^1\times[0,\infty)\rightarrow\R^2$ with initial data $\gamma_0$ is asymptotically an $\omega$-circle, in the sense that the rescaled flow $\eta(\cdot,t) := \frac{1}{L(t)}\gamma(\cdot,t)$ smoothly converges, up to reparametrisation, as $t\to\infty$ to an $\omega$-fold circle with radius $\frac{1}{2\omega\pi}$ centred at the origin.
\end{theorem}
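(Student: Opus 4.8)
The plan is to monitor a scale-invariant ``roundness defect'' and show it decays to zero along the flow, then bootstrap to smooth convergence of the rescaled curves. The starting point is the sharp lower bound for the scale-invariant energy $\mathcal{E}[\gamma]:=L[\gamma]\,E[\gamma]=\tfrac12 L\int k^2\,ds$. Since the turning number satisfies $\int k\,ds=\pm 2\pi\omega$ and is preserved along the smooth flow (being a continuous, integer-valued function of $t$), Cauchy--Schwarz together with $\int|k|\,ds\ge|\int k\,ds|=2\pi\omega$ gives $\mathcal{E}[\gamma]\ge 2\pi^2\omega^2$, with equality exactly for round $\omega$-circles. Writing $k=\kav+(k-\kav)$ with $\kav=\tfrac1L\int k\,ds$, this refines to the identity
\begin{equation}
\mathcal{E}[\gamma]-2\pi^2\omega^2=\tfrac12 L\int (k-\kav)^2\,ds,
\end{equation}
so the energy gap is precisely the scale-invariant $L^2$-oscillation of the curvature. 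A Poincar\'e inequality on the circle of circumference $L$ then bounds this gap by $C\,L^3\int k_s^2\,ds$, which is exactly the scale-invariant quantity appearing in \eqref{EQgeomsmallintro}. Hence the hypothesis makes the initial energy gap small.

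Next I would pass to the rescaled flow $\eta=\tfrac{1}{L(t)}\gamma$, which has unit length and curvature $\kappa=Lk$, so that $E[\eta]=\mathcal{E}[\gamma]$. Reparametrising time by $\tfrac{d\tau}{dt}=L^{-4}$ — the natural parabolic scaling, for which circles obey $L\sim t^{1/4}$ so that $\tau\sim\log t$ — turns the round $\omega$-circle of radius $\tfrac{1}{2\pi\omega}$ into a genuine stationary solution of the normalised flow. The problem thereby becomes one of asymptotic stability of this fixed point. The round circle is not merely a critical point but the strict minimiser of $E[\eta]$ in its turning-number class by the inequality above, which is what gives hope for convergence despite the absence of stationary solutions and the loss of compactness (the length diverges) in the unrescaled picture.

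The technical heart is a differential inequality for the defect. Using the evolution equations $\tfrac{d}{dt}E=-\int(k_{ss}+\tfrac12k^3)^2\,ds$ and $\tfrac{d}{dt}L=\int(k_{ss}+\tfrac12k^3)k\,ds=-\int k_s^2\,ds+\tfrac12\int k^4\,ds$, together with the Dziuk--Kuwert--Sch\"atzle interpolation and smoothing estimates for curves (controlling $\int k_s^2\,ds$ and all higher curvature derivatives by the energy and lower-order scale-invariant quantities), I would show that while the oscillation is small it obeys, in rescaled time $\tau$, a differential inequality of the form $\tfrac{d}{d\tau}\big(L\int(k-\kav)^2\,ds\big)\le -c\,L\int(k-\kav)^2\,ds+(\text{higher order})$, where the linear dissipative term comes from the spectral gap of the linearisation of the normalised flow at the round circle and the higher-order terms are absorbed using smallness. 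This simultaneously shows the defect stays small for all time (a continuity and bootstrap argument closing the a priori assumption) and that it decays to zero, forcing $\mathcal{E}[\gamma]\to 2\pi^2\omega^2$. I expect this step — closing the nonlinear differential inequality uniformly in the rescaled, non-compact setting — to be the main obstacle, since the flow is neither length-preserving nor possessed of a stationary solution in the original frame, so the standard convergence machinery for constrained elastic flows does not apply directly.

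Finally I would upgrade decay of the defect to smooth convergence. The smallness propagated above yields, via the smoothing estimates, uniform bounds on $\int(\partial_\sigma^m\kappa)^2\,d\sigma$ for all $m$, giving smooth subconvergence after reparametrisation to a limit $\eta_\infty$ with $\partial_\sigma\kappa\equiv 0$ and turning number $\omega$, that is, a round $\omega$-circle, necessarily of radius $\tfrac{1}{2\pi\omega}$ since $\eta_\infty$ has unit length. To obtain full (not merely subsequential) convergence and to fix the centre at the origin, I would normalise translations by tracking the barycenter and showing it converges, and invoke a \L ojasiewicz--Simon gradient inequality for $E$ near the round circle to rule out drift along the remaining finite-dimensional symmetry directions and to produce a rate. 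Combining the decay of the energy gap with the \L ojasiewicz--Simon estimate then pins down the unique limiting $\omega$-circle centred at the origin and promotes the convergence to the full, smooth, parametrised statement.
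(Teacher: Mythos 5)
Your overall architecture---a scale-invariant roundness defect, the identity $LE-2\pi^2\omega^2=\tfrac12L\int(k-\kav)^2\,ds$, decay while small, then bootstrap to smooth convergence---is the right shape and broadly matches the paper's, but two of your steps have genuine gaps. First, your entire scheme presupposes that the rescaled time $\tau=\int_0^t L(\hat t)^{-4}\,d\hat t$ diverges, and this is not known a priori: the only unconditional length bound is the linear one $L(t)\le L(0)+Ct$ of Dziuk--Kuwert--Sch\"atzle, under which $\int_0^\infty L^{-4}\,dt$ may perfectly well be \emph{finite}, in which case exponential decay in $\tau$ yields no decay at all in $t$. What is needed is the sharp two-sided growth $L(t)^4=L(0)^4+32\omega^4\pi^4\,t+O(\sigma t)$, which the paper establishes (Proposition \ref{prop:length_sharp_estimate}) by decomposing $\frac{d}{dt}L^4$ via $k=(k-\kav)+\kav$---and which is itself only valid while $\varepsilon(t)=L^3\vn{k_s}_2^2$ stays small, so the circularity between ``smallness persists'' and ``length grows sharply'' must be broken by an explicit lifespan argument (Lemma \ref{lem:delta_*}). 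Relatedly, your central differential inequality is asserted from a ``spectral gap'' heuristic, but the competition of constants is genuinely borderline: the dissipation term $-\frac{13}{6}\kav^4\int k_s^2\,ds$ from Lemma \ref{LMsmallnessks} must beat the $+\frac32\kav^4$-type contribution coming from $\frac12\int k^4$ in the length/normalisation terms, and the paper only wins by a delicate choice of interpolation parameters and by normalising with the \emph{monotone} energy, $\SQ=\int k_s^2\,ds\big/(\int k^2\,ds)^3$, rather than with $L$, precisely because sharp length control is unavailable at the stage where the decay must be closed.

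Second, your endgame does not work as stated. A \L ojasiewicz--Simon inequality requires a critical point of a functional whose gradient flow you are running; here $E$ has no closed critical points at all, and the length-normalised flow is \emph{not} the length-preserving (constrained-gradient) elastic flow---the rescaling contributes a velocity $-L^2L'\eta$ that is not a normal field of the constrained problem---so the standard machinery does not apply off the shelf. Likewise, barycenter convergence does not follow from the achievable rates: $\vn{F}_\infty\sim t^{-3/4}$ is non-integrable, and even exploiting the cancellation $\int\nu\,ds=0$ improves this only to roughly $t^{-3/4-c_1/2}$ with $c_1\approx\frac{3}{10}<\frac12$, still non-integrable; consistently, the paper claims only that the centre of mass is confined to a region of radius $o(t^{1/4})$, never that it converges. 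The paper's substitute is elementary but essential and is the step you are missing: integrate $|\partial_t\gamma|\le\vn{k_{ss}}_\infty+\tfrac12\vn{k^3}_\infty$ with the \emph{sharp} constants from Proposition \ref{PRestf} and the sharp length lower bound to get $\limsup_{t\to\infty}\vn{\eta(\cdot,t)}_\infty\le\frac{16\omega^3\pi^3}{32\omega^4\pi^4-\varepsilon_1\hat{C}(\varepsilon_1,\omega)}$, then restart the argument at later and later times $M$ where Corollary \ref{CYpres} permits a smaller $\varepsilon_1(M)\to0$, and pass $M\to\infty$ to obtain $\limsup\vn{\eta}_\infty\le\frac{1}{2\omega\pi}$, which pins the limit circle at the origin without any convergence of the barycenter.
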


\begin{rmk}
The convergence rate of the solution is polynomial in time, with specific exponent governed by a universal constant $c_1>0$, which is in particular independent of $\omega$. For example, we have
\[
    ||k_{s^m}||_\infty
    \le C(1+t)^{-(m+1+c_1)/4}
\,,
\]
where $C$ depends on $m$, $\omega$, $\gamma_0$ (see Lemma \ref{LMkssest} for details).
\end{rmk}

Special solutions indicate why a stability result may be difficult to prove.
The key point is that length grows at a specific rate when close to a circle.
In order to control the flow using the condition \eqref{EQgeomsmallintro}, there are two key tasks.
First, we need to prove a sharp length estimate; that is, one with the exact rate of an $\omega$-circle.
This is achieved by studying the evolution of length carefully, applying a decomposition of the curvature into its average (which depends on the length and turning number only) and average-free parts.
Second, we need to show that the condition \eqref{EQgeomsmallintro} improves under the flow.
This second part requires some inspirational choice of functional adapted to the problem.
Here, we take $\int k_s^2\,ds$ and normalise the scale by using the elastic energy; that is, we consider
\[
\SQ(t) := \frac{\ds\int k_s^2\,ds}{\ds\bigg( \int k^2\,ds \bigg)^3}
\,.
\]
This idea departs from the standard way of working with normalisation by length (see \cite{DKS02} and also \cite{AMWW,W13,MO21} for other flows).
Once $\SQ(t)$ is shown to decay, then we also obtain decay of $\varepsilon(t) = L^3\int k_s^2\, ds$.

Another remarkable point is that the translation of the flow is also controlled.
Not only does the rescaled flow not wander off to infinity, but also the centre of the final circle must be the origin.
In terms of the original flow, the centre of mass must be confined to a region of radius $o(t^{\frac14})$.

\begin{rmk}
There are a variety of approaches to proving convergence of flows under given initial conditions.
One prominent approach is via center manifold analysis and linearisation, as famously demonstrated, for example, by \cite{EMS98} for the surface diffusion flow and \cite{S01} for the Willmore flow.
This approach does not yield an explicit constant but does have the advantage of enabling much weaker regularity requirements --- see for example \cite{EM10} for the surface diffusion flow.
One key difficulty here in making this approach work is that, unlike surface diffusion or Willmore flows for closed surfaces, the flow in question never converges. 
Rescaling (or alternative reasoning) would be required to address this issue.
Moreover, it is unclear whether decay estimates in terms of the original time parameter can be derived purely through a linearisation approach, even with an appropriate rescaling. 
Instead, estimates may need to follow the lines of Proposition \ref{PRestf}, Lemma \ref{LMkssest}, and the proof of Theorem \ref{thm:stability}.
\end{rmk}

\subsection{Open problems}

As our study provides the very first asymptotic analysis of the free elastic flow for closed planar curves, there still remain many interesting open problems.

While we deal with all non-zero turning numbers, the case of $\omega=0$ remains open.
Here, the model solution is not an $\omega$-circle but rather the lemniscate of Bernoulli.
We conjecture that, as with the circle case considered here, an analogous stability result holds for the lemniscate.

Our theorem yields in an obvious way the non-existence of non-circular expanding solutions satisfying \eqref{EQgeomsmallintro}.
In order to better understand the behaviour of the free elastic flow, it is absolutely crucial to classify all self-similar solutions, regardless of curvature condition.

In our proof, a crucial role is played by sharp control of the length.
A known general estimate of Dziuk--Kuwert--Sch\"atzle \cite[(3.11)]{DKS02} shows that the length grows at most linearly in time.
We do not expect the linear upper bound is optimal; in fact, the derivation of \cite[(3.11)]{DKS02} even implies that for a universal $C>0$, 
\begin{equation}\label{eq:length_sublinear}
    L(t) \leq L(0)+ C \int_0^t E(\hat{t})^3\,d\hat{t}\,,
\end{equation}
and the energy $E$ decreases in general.
All the known explicit solutions, the circles and the lemniscate of Bernoulli, have computable length growth $\sim t^{\frac14}$.
Our proof implies that all free elastic flows satisfying \eqref{EQgeomsmallintro} have this same length growth (see Proposition \ref{prop:length_sharp_estimate}).
This leads one to the question: does there exist any solution to the free elastic flow with length growth different to $O(t^{\frac14})$?

Finally, we mention Huisken's problem (see \cite{MPP21}).
This asks after the existence of an elastic flow that begins contained in the upper half plane but `migrates' to the lower half plane at a positive time.
Very recently \cite{KM24} Huisken's problem has been resolved for certain length-preserving flows of open curves, but the original problem for length-penalised flows of closed curves is left open.
It also remains open for the free elastic flow.
One might suspect that migration can not occur for the free elastic flow, since the flow tends to inflate all curves.
However there is the possibility that the centre of mass along a solution is not well-controlled and in fact diverges to infinity.
Indeed, controlling the centre of mass of a flow is a delicate point, and in higher-order flows typically is possible only after very strong convergence allows an integration of the evolution equation in time.
Thus the two problems are connected: (1) Does there exist a free elastic flow with centre of mass diverging to infinity? (2) Does there exist a migrating free elastic flow?

\section*{Acknowledgements}

The first author is supported by JSPS KAKENHI Grant
Numbers JP21H00990, JP23H00085, and JP24K00532.

\section{Preliminaries}
\label{sec:prelim}

\subsection{Notation}\label{subsec:notation}

For a curve $\gamma:\S^1\to\R^2$ ($u\mapsto\gamma(u)$), let $\partial_s:=|\partial_u\gamma|^{-1}\partial_u$ be the arc-length derivative, $\tau:=\partial_s\gamma$ the unit tangent vector, $\nu:=\text{rot}\,\tau$ the inward-pointing unit normal vector (the counterclockwise rotation of $\tau$ through angle $\pi/2$), $\kappa:=\partial_s^2\gamma$ the curvature vector, and $k:=\IP{\kappa}{\nu}$ the signed curvature, where $\IP{\cdot}{\cdot}$ stands for the Euclidean inner product.
We also frequently use subindex-type notation like $\gamma_u$, $\gamma_{ss}$ to denote derivatives.
For abbreviation we will also write
\[
F := k_{ss} + \frac12k^3\,.
\]
In particular, the free elastic flow is expressed as $\partial_t\gamma=-F\nu$.
Let us also introduce the norms $\vn{f}_p:=(\int_{\S^1} |f|^p ds)^{1/p}$, where $ds:=|\gamma_u|du$, and $\vn{f}_\infty:=\text{ess}\sup_{u\in\S^1}|f(u)|$.
The average is denoted by 
\[\overline{f}:=\frac{1}{L[\gamma]}\int_{\S^1} f\,ds\,.
\]
We will often drop the domain integral if it is not confusing.

For a family of curves $\gamma:\S^1\times[0,T)\to\R^2$ ($(u,t)\mapsto\gamma(u,t)$) we also use the same notation $\partial_s$ for spatial derivatives of $\gamma(\cdot,t)$, and $\partial_t$ for time derivatives.
The spatial integral $\int f\, ds$ is understood at each time slice $t$.

\subsection{Special solutions}
\label{subsec:special}

\subsubsection{Nonexistence of stationary solutions}

Every critical point of $E$ must be graphical \cite{Miura21} so it can not be closed.
From a broader point of view, according to the standard classification of Euler's elasticae (that is, critical points of $E+\lambda L$) \cite{Singer08,MY24}, the only closed candidates are the circle and the figure-eight elastica.
Both of these have non-zero Lagrange multiplier $\lambda\neq0$, and so are not free elasticae (with $\lambda=0$).
This observation indicates that the known convergence results strongly rely on the penalisation or preservation of length.

\subsubsection{Circles}

Let $N$ be a positive integer, and consider the family of evolving $N$-circles defined by
$$\gamma^N(u,t)=\rho(t)(\cos{2\pi Nu},\sin{2\pi Nu}),$$ 
which has turning number $\omega=N$.
Since $\IP{\partial_t\gamma^N}{\nu}=-\rho'$, $k=1/\rho$, and $k_{ss}\equiv0$, this family satisfies the free elastic flow if and only if
\[
\rho'(t)=\frac{1}{2\rho(t)^3}.
\]
This ODE for the radius function $\rho$ has the unique solution
\[
\rho(t)=(\rho(0)^4+2t)^\frac{1}{4}.
\]

\subsubsection{Lemniscate of Bernoulli.}

The lemniscate of Bernoulli
$$\beta(u)=\frac{1}{1+\sin^2{u}}\left( \cos{u},\frac{1}{2}\sin{2u} \right)$$
has turning number zero, and as computed in \cite{EGMWW15}, satisfies
\[
	k(u)^3
	= -27\IP{\beta(u)}{\nu(u)}
 \quad \text{and} \quad
	k_{ss}(u)
	= 6\IP{\beta(u)}{\nu(u)}\,.
\]
Consider the family
\[
\gamma(u,t) := h(t)\beta(u).
\]
Then
\begin{align*}
	\IP{\gamma_t + \Big(k_{ss}+\frac12 k^3\Big)\nu}{\nu}
	&= h'(t)\IP{\beta}{\nu} + h^{-3}(t) \Big(6\IP{\beta}{\nu} - \frac{27}{2}\IP{\beta}{\nu}\Big)
	\\
	&= h^{-3}(t)\IP{\beta}{\nu}\Big(
		h^3(t)h'(t) - \frac{15}{2}
		\Big)
	\\
	&= \frac{h^{-3}(t)}{4}\IP{\beta}{\nu}\Big(
		(h^4(t))' - 30
		\Big).
\end{align*}
Thus, the family is a self-similar free elastic flow if
\[
	h(t) = (h(0)^4+30t)^\frac14
	\,.
\]
In particular, the map
$
	(u, t) \mapsto
	(1+30t)^\frac14\frac{\cos{u}}{1+\sin^2{u}} (1, \sin{u} )
$
is a self-similar expanding solution to the free elastic flow, with initial data the lemniscate of Bernoulli.

\section{Geometric stability of circles}

The main goal of this section is to prove Theorem \ref{thm:stability}.
We now introduce the scale invariant quantity
\[
\varepsilon(t) := L^3||k_s||_2^2\,.
\]
The key idea now is to use $\varepsilon(t)$ to control the asymptotic shape of the flow.
We will first prove the smallness-preservation and decay of $\varepsilon(t)$, which already proves that the flow tends to be circular, and then obtain the control of the centre.

We start by computing the evolution of $\vn{k_s}_2^2$ in time.

\begin{lem}
\label{LMksevol}
Let $\gamma:\S^1\times[0,T)\rightarrow\R^2$ be a free elastic flow.
Then
\[
	\frac{d}{dt} \int k_s^2\,ds =  
    - 2\int k_{sss}^2\,ds
    + 5\int k_{ss}^2k^2\,ds
    - \frac{5}{3}\int k_s^4\,ds
    - \frac{11}2\int k_s^2k^4\,ds
\,.
\]
\end{lem}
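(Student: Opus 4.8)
The plan is to compute $\frac{d}{dt}\int k_s^2\,ds$ directly by differentiating under the integral, using the standard commutation and evolution formulae for the free elastic flow $\partial_t\gamma=-F\nu$ with $F=k_{ss}+\frac12k^3$. First I would record the basic evolution identities. For a normal flow with speed $-F$, the arc-length element evolves by $\partial_t(ds)=kF\,ds$ (so that $[\partial_t,\partial_s]=kF\partial_s$), and the curvature evolves by an equation of Willmore type, $\partial_t k=-F_{ss}-k^2F$. Differentiating the latter in $s$ and using the commutator gives $\partial_t k_s=-F_{sss}-(k^2F)_s - kF k_s$, which is the ingredient I need for the $k_s$ term. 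These formulae are where I would be most careful, since sign conventions for $\nu$ and $k$ must match those fixed in Section~\ref{subsec:notation}.

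Next I would differentiate the whole integral:
\begin{equation*}
\frac{d}{dt}\int k_s^2\,ds = \int 2k_s\,\partial_t k_s\,ds + \int k_s^2\,kF\,ds,
\end{equation*}
and substitute $F=k_{ss}+\frac12k^3$ together with the expression for $\partial_t k_s$ above. This produces a sum of polynomial integrands in $k$ and its $s$-derivatives up to order five (from the $F_{sss}=k_{sssss}+\dots$ term paired against $k_s$). The core of the argument is then to integrate by parts repeatedly to lower the derivative order and collect terms. The highest-order contribution $-2\int k_s k_{sssss}\,ds$ integrates by parts to $+2\int k_{ss}k_{ssss}\,ds=-2\int k_{sss}^2\,ds$, which supplies the leading dissipative term. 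All remaining contributions are lower-order and must be massaged, via integration by parts, into the four monomials appearing in the statement.

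The main obstacle, and the only real work, is the bookkeeping of the lower-order nonlinear terms. The products $k_s(k^3)_{sss}$, $k_s(k^2F)_s$, and $k_s^3kF$ each expand into several monomials such as $k_{ss}^2k^2$, $k_s^2k_{ss}k$, $k_s^4$, and $k_s^2k^4$, and the cancellations that collapse these into exactly the coefficients $+5$, $-\frac53$, $-\frac{11}{2}$ are delicate. The key reduction tools are the identities obtained by integrating $\partial_s$ of suitable products to zero over $\S^1$: for instance $\int k_s^2 k_{ss}k\,ds = -\frac13\int k_s^4\,ds$ (from $\int(k_s^3 k)_s\,ds=0$) and analogous relations eliminating $\int k_s^2k_{ss}k\,ds$ and $\int k_{ss}k_{sss}\cdot(\text{lower})\,ds$. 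I would organise the computation by total derivative-weight, treating the weight-six terms (those scaling like $k^6$ under $k\mapsto\lambda k$, $s\mapsto\lambda^{-1}s$) separately from the cross terms, since the homogeneity is preserved by each integration by parts and serves as a running consistency check. Verifying the final coefficients against this scaling bookkeeping is what I expect to consume most of the effort.
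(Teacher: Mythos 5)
Your proposal takes essentially the same route as the paper's proof: the same evolution identities $\partial_t k=-F_{ss}-k^2F$, $\partial_t(ds)=kF\,ds$, and $\partial_t k_s=-F_{sss}-(k^2F)_s-kFk_s$, the same differentiation under the integral, the integration by parts of $-2\int k_sk_{sssss}\,ds$ into $-2\int k_{sss}^2\,ds$, and the key reduction $\int k_s^2k_{ss}k\,ds=-\tfrac13\int k_s^4\,ds$ via $(k_s^3k)_s$, which is precisely how the paper collapses the remaining terms into the coefficients $+5$, $-\tfrac53$, $-\tfrac{11}{2}$. One minor slip: your parenthetical commutator should read $[\partial_t,\partial_s]=-kF\,\partial_s$ (this is what $\partial_t(ds)=kF\,ds$ forces), but the error does not propagate, since the evolution formula for $k_s$ that you actually use carries the correct sign $-kFk_s$.
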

\begin{proof}
The commutator for arc-length and time is given by
\[
    [\partial_t,\partial_s]
    = -Fk\,\partial_s
    \,,
\]
where we recall $F = k_{ss} + \frac12k^3$.
To illustrate how this is used, consider the evolution of the tangent vector:
\begin{align*}
\tau_t
&= \partial_t\partial_s \gamma \\
&= [\partial_t,\partial_s] \gamma + \partial_s\partial_t\gamma \\
&= -Fk\tau + (-F\nu)_s \\
&= -F_s\nu
 \,.
\end{align*}
Similarly we find the evolution of the unit normal vector, the curvature vector, curvature scalar, and the derivative of the curvature scalar:
\begin{align*}
\nu_t
&= F_s\tau,
\\
\kappa_t
&= [\partial_t,\partial_s] \tau + \partial_s\partial_t \tau
\\
&= -Fk\kappa + (-F_s\nu)_s
\\\
&= -(F_{ss} + Fk^2)\nu + F_sk\tau,
\\
k_t &= \IP{\kappa}{\nu}_t
\\
&= -(F_{ss} + Fk^2),
\\
k_{st} &= [\partial_t,\partial_s] k + k_{ts}
\\
&= -Fk_sk - (F_{ss} + Fk^2)_s
\\
&= -(F_{sss} + F_sk^2 + 3Fk_sk)\,.
\end{align*}
In addition, the evolution law of the arc-length measure $ds$ is obtained as follows:
\[
ds_t
 = \frac{\IP{\gamma_{ut}}{\gamma_{u}}}{|\gamma_u|}du
 = \frac{\IP{(-F\nu)_u}{\gamma_{u}}}{|\gamma_u|}du
 = -F\frac{|\gamma_u|\IP{-k\tau}{\gamma_{u}}}{|\gamma_u|}du
 = Fk\,ds\,,
\]

Now we combine the above to find
\begin{align*}
\frac{d}{dt} \int k_s^2\,ds
&=
    - 2\int (F_{sss} + F_sk^2 + 3Fk_sk)k_s
        \,ds
    + \int Fk_s^2k\,ds
\\&=
    \int (-2F_{sss} - 2F_sk^2 - 5Fk_sk)k_s
        \,ds
        \,.
\end{align*}
Using $F = k_{ss} + \frac12k^3$ and integrating by parts yield
\begin{align*}
\frac{d}{dt} \int k_s^2\,ds
&=
    - 2\int k_{sss}^2\,ds
    + \int (k^3)_{ss}k_{ss}
        \,ds
    - 2\int k_{sss}k_sk^2
        \,ds
    - \int (k^3)_s k_sk^2
        \,ds
\\&\qquad
    - 5\int k_{ss}k_s^2k
        \,ds
    - \frac52\int k_s^2k^4
        \,ds.
\end{align*}
Using $(k^3)_s=3k^2k_s$ and $(k^3)_{ss}=3k^2k_{ss}+6k_s^2k$ for the fourth and second terms, respectively, and integrating by parts for the third term further imply
\begin{align*}
\frac{d}{dt} \int k_s^2\,ds
&=
    - 2\int k_{sss}^2\,ds
    + 3\int (k_{ss}k^2 + 2k_s^2k)k_{ss}
        \,ds
    + 2\int k_{ss}(k_{ss}k^2 + 2k_s^2k)
        \,ds
\\&\qquad
    - 5\int k_{ss}k_s^2k
        \,ds
    - \frac{11}2\int k_s^2k^4
        \,ds
\\
&=
    - 2\int k_{sss}^2\,ds
    + 5\int k_{ss}^2k^2
        \,ds
    + 5\int k_{ss}k_s^2k
        \,ds
    - \frac{11}2\int k_s^2k^4
        \,ds
\,.
\end{align*}
Integrating by parts for the third term via $k_{ss}k_s^2=(\frac{1}{3}k_s^3)_s$ completes the proof.
\end{proof}

From now on we will often use the following $L^2$ and $L^\infty$ Poincar\'e--Sobolev--Wirtinger inequalities as in \cite{W13}:

\begin{lem}
    If $f:\S^1\to\R$ has zero average $\overline{f}=0$, then 
    \begin{equation}\label{eq:Poincare}
        \text{$\|f\|_2^2\leq \frac{L^2}{4\pi^2}\|f_s\|_2^2$ \quad and \quad $\|f\|_\infty^2\leq\frac{L}{2\pi}\|f_s\|_2^2$.}
    \end{equation}
\end{lem}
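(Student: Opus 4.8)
The plan is to reduce both inequalities to elementary facts about Fourier series. First I would reparametrise by arc length: writing $g(s):=f(u(s))$, the total arc length being $L$ makes $g$ an $L$-periodic function of $s\in\R$, and the quantities to be estimated become $\|f\|_2^2=\int_0^L g^2\,ds$ and $\|f_s\|_2^2=\int_0^L g_s^2\,ds$, while the zero-average hypothesis reads $\int_0^L g\,ds=0$. Expanding $g(s)=\sum_{n\in\Z}c_n e^{2\pi i n s/L}$, the condition $\overline{f}=0$ is precisely $c_0=0$.

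For the $L^2$ estimate I would invoke Parseval. One has $\|f\|_2^2=L\sum_{n\neq 0}|c_n|^2$ and, differentiating term by term, $\|f_s\|_2^2=\frac{4\pi^2}{L}\sum_{n\neq 0}n^2|c_n|^2$. Since $n^2\geq 1$ for every nonzero integer $n$, comparing the two series gives $\|f_s\|_2^2\geq\frac{4\pi^2}{L^2}\|f\|_2^2$, which is the first inequality (and it is sharp, with equality for the single mode $e^{\pm 2\pi i s/L}$).

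For the $L^\infty$ estimate I would bound $g$ pointwise by its Fourier coefficients and then apply Cauchy--Schwarz in frequency. Since $|g(s)|\leq\sum_{n\neq 0}|c_n|$, writing $|c_n|=\frac{1}{|n|}\cdot|n||c_n|$ gives $|g(s)|^2\leq\big(\sum_{n\neq 0}n^{-2}\big)\big(\sum_{n\neq 0}n^2|c_n|^2\big)$. Here $\sum_{n\neq 0}n^{-2}=\frac{\pi^2}{3}$, and the second factor equals $\frac{L}{4\pi^2}\|f_s\|_2^2$ by the computation above, so $\|f\|_\infty^2\leq\frac{L}{12}\|f_s\|_2^2$. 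Since $\frac{1}{12}\leq\frac{1}{2\pi}$, this yields the stated bound (indeed a stronger one).

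The steps are each routine; the only genuine subtlety, and the point I would be most careful about, is the constant in the $L^\infty$ inequality. A naive application of the fundamental theorem of calculus from a zero of $g$ (which exists because $g$ has mean zero), optimised over the two arcs joining that zero to a given point, only yields the constant $\frac{L}{4}$, and $\frac14>\frac{1}{2\pi}$, so that elementary route is insufficient to reach the claimed constant. It is precisely the summability of $\sum n^{-2}$ that produces a constant small enough to beat $\frac{1}{2\pi}$, which is why I would route both inequalities through Fourier series rather than through pointwise integration.
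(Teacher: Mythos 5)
Your proof is correct, and it takes a different route from the paper, which states the lemma without proof, citing \cite{W13}; the standard argument there (and in \cite{DKS02}) handles the $L^\infty$ bound elementarily: since $\overline{f}=0$ there is a point $s_0$ with $f(s_0)=0$, and writing
\[
f(s)^2=\frac12\left(\int_{s_0}^{s}(f^2)_\sigma\,d\sigma-\int_{s}^{s_0+L}(f^2)_\sigma\,d\sigma\right)\le\int_{\S^1}|f||f_s|\,ds\le\|f\|_2\|f_s\|_2\,,
\]
one then chains in the first inequality $\|f\|_2\le\frac{L}{2\pi}\|f_s\|_2$ to obtain exactly the constant $\frac{L}{2\pi}$. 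Your Fourier-analytic route is self-contained and in fact yields the sharp constant $\frac{L}{12}$ (extremised by the second Bernoulli polynomial, i.e.\ $c_n\propto n^{-2}$), which is strictly stronger than $\frac{L}{2\pi}$ since $\frac{1}{12}<\frac{1}{2\pi}$; your Parseval computation, the use of $\sum_{n\neq0}n^{-2}=\frac{\pi^2}{3}$, and the pointwise bound $|g(s)|\le\sum_{n\neq0}|c_n|$ (valid since $f\in H^1$ makes the series absolutely and uniformly convergent to the continuous representative) are all in order. One small correction to your closing remark: while the naive optimisation over the two arcs does stall at $\frac{L}{4}$, Fourier summability is not actually needed to reach $\frac{1}{2\pi}$ --- the averaged identity displayed above is still ``pointwise integration'' and attains $\frac{L}{2\pi}$ exactly; what the Fourier route genuinely buys is the optimal constant $\frac{L}{12}$, at the mild cost of invoking Parseval rather than only the fundamental theorem of calculus and Cauchy--Schwarz.
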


This will typically be applied to the derivatives of $k$, or the average-free part $k-\bar{k}$.

To this end we apply the average-free/average decomposition to Lemma \ref{LMksevol}.

\begin{lem}
\label{LMksevol2}
Let $\gamma:\S^1\times[0,\infty)\rightarrow\R^2$ be a free elastic flow.
Then
\begin{align*}
\frac{d}{dt} \int k_s^2\,ds
	&\leq
	 -\frac{1}{8}\int k_{sss}^2\,ds - \frac{13}{6}\kav^4 \int k_s^2\,ds\\
  &\qquad - 22\kav^3\int (k-\kav)k_s^2\,ds + 5\int (k-\kav)^2k_{ss}^2\,ds
	 + 10\kav\int (k-\kav)k_{ss}^2\,ds\,.
\end{align*}
\end{lem}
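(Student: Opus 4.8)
The plan is to start from the exact identity of Lemma \ref{LMksevol} and insert the decomposition $k=\kav+(k-\kav)$ into the two genuinely nonlinear terms $5\int k_{ss}^2k^2\,ds$ and $-\frac{11}{2}\int k_s^2k^4\,ds$, using throughout that $\kav$ is constant in space, so that $k_s=(k-\kav)_s$ and $k_{ss}=(k-\kav)_{ss}$. Expanding
\[
k^2=\kav^2+2\kav(k-\kav)+(k-\kav)^2,\qquad k^4=\kav^4+4\kav^3(k-\kav)+6\kav^2(k-\kav)^2+4\kav(k-\kav)^3+(k-\kav)^4,
\]
and collecting, three of the resulting pieces---namely $-22\kav^3\int(k-\kav)k_s^2\,ds$, $5\int(k-\kav)^2k_{ss}^2\,ds$, and $10\kav\int(k-\kav)k_{ss}^2\,ds$---already appear verbatim on the right-hand side of the claim. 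Everything else must then be shown to be dominated by $-\frac{1}{8}\int k_{sss}^2\,ds-\frac{13}{6}\kav^4\int k_s^2\,ds$.

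After the expansion the leftover terms are
\[
-2\int k_{sss}^2\,ds+5\kav^2\int k_{ss}^2\,ds-\frac{5}{3}\int k_s^4\,ds-\frac{11}{2}\kav^4\int k_s^2\,ds-33\kav^2\int(k-\kav)^2k_s^2\,ds-22\kav\int(k-\kav)^3k_s^2\,ds-\frac{11}{2}\int(k-\kav)^4k_s^2\,ds.
\]
I would first discard the manifestly nonpositive term $-\frac{5}{3}\int k_s^4\,ds$. Next I would absorb the sign-indefinite cubic cross-term $-22\kav\int(k-\kav)^3k_s^2\,ds$ into the two negative quartic-type terms via Young's inequality: writing $|\kav|\,|k-\kav|\le\kav^2+\frac14(k-\kav)^2$ (Young with weight $\alpha=2$) gives $22|\kav|\,|k-\kav|^3\le 22\kav^2(k-\kav)^2+\frac{11}{2}(k-\kav)^4$, and these are exactly covered by the available $-33\kav^2\int(k-\kav)^2k_s^2\,ds$ and $-\frac{11}{2}\int(k-\kav)^4k_s^2\,ds$, leaving a harmless nonpositive remainder $-11\kav^2\int(k-\kav)^2k_s^2\,ds$ that I also discard. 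The neat feature here is that the weight $\alpha=2$ is the unique choice that keeps \emph{both} leftover quartic coefficients nonpositive.

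The crux is then to control the single positive term $5\kav^2\int k_{ss}^2\,ds$ using only the third-derivative term $-2\int k_{sss}^2\,ds$ and the term $-\frac{11}{2}\kav^4\int k_s^2\,ds$ already present. I would integrate by parts, $\int k_{ss}^2\,ds=-\int k_sk_{sss}\,ds$, apply Cauchy--Schwarz to get $\int k_{ss}^2\,ds\le\vn{k_s}_2\vn{k_{sss}}_2$, and then use Young in the form $5\kav^2\vn{k_s}_2\vn{k_{sss}}_2\le\frac{\epsilon}{2}\vn{k_{sss}}_2^2+\frac{25}{2\epsilon}\kav^4\vn{k_s}_2^2$. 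The delicate point---and the main obstacle---is the exact bookkeeping of constants: combining with the two reserved terms forces $-2+\frac{\epsilon}{2}\le-\frac18$ and $-\frac{11}{2}+\frac{25}{2\epsilon}\le-\frac{13}{6}$, i.e. $\epsilon\le\frac{15}{4}$ and $\epsilon\ge\frac{15}{4}$ simultaneously. Thus $\epsilon=\frac{15}{4}$ is forced and both inequalities are saturated, so the estimate is genuinely sharp and leaves no slack; substituting this value yields precisely the coefficients $-\frac18$ (from $-2+\frac{15}{8}$) and $-\frac{13}{6}$ (from $-\frac{11}{2}+\frac{10}{3}$), which completes the proof. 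I expect that keeping track of these tight constants, rather than any single estimate, is where care is required.
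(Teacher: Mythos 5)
Your proof is correct and follows essentially the same route as the paper: the same average/average-free expansion of $k^2$ and $k^4$ keeping the three terms verbatim, the same Young absorption of $-22\kav\int(k-\kav)^3k_s^2\,ds$ into the quartic terms (the paper's choice $a_1=1$), and the same integration-by-parts-plus-Young step $5\kav^2\int k_{ss}^2\,ds\le \frac{15}{8}\int k_{sss}^2\,ds+\frac{10}{3}\kav^4\int k_s^2\,ds$ (the paper's $a_0=\frac{15}{8}$, i.e.\ your $\epsilon=\frac{15}{4}$), with identical final coefficients. One minor aside: your claim that the cubic-absorption weight is \emph{unique} is not quite right---any weight in a short interval keeps both leftover quartic coefficients nonpositive, consistent with the paper's remark that this step has slack---but this does not affect the validity of your argument.
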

\begin{proof}
Note first the following formulae that follow in a straightforward way by using $k = (k-\kav) + \kav$:
\[
	5\int k^2k_{ss}^2\,ds
	 = 5\int (k-\kav)^2k_{ss}^2\,ds
	 + 10\kav\int (k-\kav)k_{ss}^2\,ds
	 + 5\kav^2\int k_{ss}^2\,ds\,,
\]
\begin{align*}
    	-\frac{11}{2}\int k^4k_s^2\,ds
	&= -\frac{11}{2}\int (k-\kav)^4k_s^2\,ds - 22\kav\int (k-\kav)^3k_s^2\,ds
		 \\
        &\qquad - 33\kav^2\int (k-\kav)^2k_s^2\,ds - 22\kav^3\int (k-\kav)k_s^2\,ds
		 - \frac{11}{2}\kav^4\int k_s^2\,ds\,.
\end{align*}
In addition, for $a_0,a_1>0$,
\[
	 5\kav^2\int k_{ss}^2\,ds = -5\kav^2\int k_{sss}k_s\,ds
	\le a_0 \int k_{sss}^2\,ds + \frac{25}{4a_0}\kav^4 \int k_s^2\,ds
 \,,
\]
\[
	-22\kav\int (k-\kav)^3k_s^2\,ds
	\le 22a_1\kav^2\int (k-\kav)^2k_s^2\,ds + \frac{11}{2a_1}\int (k-\kav)^4k_s^2\,ds\,.
\]
Now we apply the first two identities to the corresponding terms in Lemma \ref{LMksevol}, and further apply the last two estimates to deduce that
\begin{align*}
\frac{d}{dt} \int k_s^2\,ds
	&\le
	 -(2-a_0)\int k_{sss}^2\,ds
	 -\frac53\int k_s^4\,ds
	 -\frac{11}2\Big(1 - \frac{1}{a_1}\Big)\int (k-\kav)^4k_s^2\,ds
\\&\qquad
	 -(33 - 22a_1)\kav^2\int (k-\kav)^2k_s^2\,ds
	+ \Big(\frac{25}{4a_0}- \frac{11}2\Big)\kav^4 \int k_s^2\,ds
\\&\qquad
	 - 22\kav^3\int (k-\kav)k_s^2\,ds + 5\int (k-\kav)^2k_{ss}^2\,ds
	 + 10\kav\int (k-\kav)k_{ss}^2\,ds\,.
\end{align*}
Taking $a_0=\frac{15}{8}$ and $a_1=1$ and deleting some non-positive terms complete the proof.
\end{proof}

\begin{rmk}
    The above $a_1$ is chosen just to reduce the number of terms; in fact, there is essentially no need to take $a_1$ because the term $\kav\int (k-\kav)^3k_s^2\,ds$ can be absorbed into the first term under the smallness assumption imposed later.
    On the other hand, the choice of $a_0$ is much more delicate, and even the numerical value is important.
    Our subsequent arguments require not only $a_0<2$ for making the first term negative, but also $a_0>\frac{25}{16}$ to have the coefficient of $\kav^4\int k_s^2\,ds$ less than $-\frac{3}{2}$ (this will be used in the proof of Proposition \ref{PRPkeyestimate}).
\end{rmk}

Now we estimate $\varepsilon(t)$.
The first task is to show that if it starts small enough, it remains small for an estimable period of time.
An important step toward this is the following estimate.

\begin{lem}\label{LMsmallnessks}
Let $\gamma:\S^1\times[0,\infty)\rightarrow\R^2$ be a free elastic flow.
Then
\begin{align*}
	\frac{d}{dt} \int k_s^2\,ds
	\le -\left(\frac{1}{8}-\frac{5}{8\pi^3}\varepsilon(t)-\frac{11\omega^3+5\omega}{\sqrt{2\pi^3}}\sqrt{\varepsilon(t)}\right)\int k_{sss}^2\,ds - \frac{13}{6}\kav^4\int k_s^2\,ds\,.
\end{align*}
In particular, at any $t$ such that
\[
\varepsilon(t)
\leq \varepsilon_*(\omega) := 
\frac{8\pi^3}{25}\left(\sqrt{\big( 11\omega^3+5\omega \big)^2+\frac{5}{16}}- \big( 11\omega^3+5\omega \big)\right)^2
\]
we have
\[
	\frac{d}{dt} \int k_s^2\,ds
	\leq -\frac{1}{16}\int k_{sss}^2\,ds - \frac{13}{6}\kav^4\int k_s^2\,ds\,.
\]
\end{lem}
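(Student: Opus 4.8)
The plan is to begin from the reduced inequality of Lemma \ref{LMksevol2} and to absorb its three error terms,
\[
-22\kav^3\!\int(k-\kav)k_s^2\,ds,\quad 5\!\int(k-\kav)^2k_{ss}^2\,ds,\quad 10\kav\!\int(k-\kav)k_{ss}^2\,ds,
\]
into the leading good term $-\frac18\int k_{sss}^2\,ds$, paying for this by shrinking its coefficient by an amount controlled by $\varepsilon(t)$. Two facts drive the estimate: first, $k-\kav$, $k_s$ and $k_{ss}$ all have zero average, so the Poincar\'e--Sobolev--Wirtinger inequalities \eqref{eq:Poincare} apply to each; second, since the turning number is preserved along the flow, $\gamma(\cdot,t)$ has absolute turning number $\omega$ for every $t$, so $\int k\,ds=\pm2\pi\omega$ and hence $|\kav|=\frac{2\pi\omega}{L}$ at each time. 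Throughout, the scale-invariant quantity $\varepsilon(t)=L^3\vn{k_s}_2^2$ serves as the bookkeeping device, and I would systematically substitute $\vn{k_s}_2^2=\varepsilon/L^3$.

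For each error term the recipe is identical: estimate one factor in $L^\infty$ via the second inequality in \eqref{eq:Poincare}, and raise the remaining derivatives up to $k_{sss}$ by applying the first ($L^2$) inequality once or twice. For the first term I would bound by $22|\kav|^3\vn{k-\kav}_\infty\vn{k_s}_2^2$ and then use $\vn{k-\kav}_\infty\le(\frac{L}{2\pi})^{1/2}\vn{k_s}_2$, $\vn{k_s}_2^2\le\frac{L^4}{16\pi^4}\vn{k_{sss}}_2^2$ and $|\kav|^3=\frac{(2\pi\omega)^3}{L^3}$; all powers of $L$ cancel and the coefficient collapses to $\frac{11\omega^3}{\sqrt{2\pi^3}}\sqrt{\varepsilon}$. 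The third term is treated the same way, with a single factor $|\kav|=\frac{2\pi\omega}{L}$ and $\vn{k_{ss}}_2^2\le\frac{L^2}{4\pi^2}\vn{k_{sss}}_2^2$, giving $\frac{5\omega}{\sqrt{2\pi^3}}\sqrt{\varepsilon}$. The second term carries no $\kav$, so I would instead use the squared bound $\vn{k-\kav}_\infty^2\le\frac{L}{2\pi}\vn{k_s}_2^2=\frac{\varepsilon}{2\pi L^2}$ with $\vn{k_{ss}}_2^2\le\frac{L^2}{4\pi^2}\vn{k_{sss}}_2^2$, producing $\frac{5}{8\pi^3}\varepsilon$. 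Summing these three bounds on $\vn{k_{sss}}_2^2$ and reinserting into Lemma \ref{LMksevol2} yields exactly the first displayed inequality.

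The ``in particular'' statement then reduces to elementary algebra. The correction $g(\varepsilon):=\frac{5}{8\pi^3}\varepsilon+\frac{11\omega^3+5\omega}{\sqrt{2\pi^3}}\sqrt{\varepsilon}$ is strictly increasing on $[0,\infty)$, so it suffices to locate the threshold where $g(\varepsilon)=\frac18-\frac1{16}=\frac1{16}$. Setting $x=\sqrt{\varepsilon}$ and $b=11\omega^3+5\omega$ turns this into $\frac{5}{8\pi^3}x^2+\frac{b}{\sqrt{2\pi^3}}x-\frac1{16}=0$, whose positive root is $x=\frac{2\sqrt{2\pi^3}}{5}\big(\sqrt{b^2+\tfrac5{16}}-b\big)$; squaring recovers precisely $\varepsilon_*(\omega)$ (one may verify directly that $g(\varepsilon_*)=\frac15 c(c+2b)=\frac1{16}$ with $c=\sqrt{b^2+\tfrac5{16}}-b$, using $c(c+2b)=\tfrac5{16}$). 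For $\varepsilon(t)\le\varepsilon_*$, monotonicity gives $g(\varepsilon(t))\le\frac1{16}$, so the coefficient of $-\int k_{sss}^2\,ds$ is at least $\frac1{16}$, as claimed. The one genuinely delicate step is the scale bookkeeping in the middle paragraph: for each error term one must choose correctly which factor to control in $L^\infty$ and how many $L^2$-Poincar\'e steps to apply, so that all powers of $L$ cancel and the surviving constants come out to be exactly $\frac{11\omega^3}{\sqrt{2\pi^3}}$, $\frac{5\omega}{\sqrt{2\pi^3}}$ and $\frac{5}{8\pi^3}$. Scale invariance of $\varepsilon$ guarantees the cancellation in principle, but pinning down the numerical constants is where the care lies.
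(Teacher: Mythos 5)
Your proposal is correct and follows essentially the same route as the paper's proof: both start from Lemma \ref{LMksevol2}, absorb the three error terms into $-\frac18\int k_{sss}^2\,ds$ via the inequalities \eqref{eq:Poincare} together with $|\kav|=\frac{2\pi\omega}{L}$, arriving at exactly the coefficients $\frac{11\omega^3}{\sqrt{2\pi^3}}\sqrt{\varepsilon}$, $\frac{5}{8\pi^3}\varepsilon$, and $\frac{5\omega}{\sqrt{2\pi^3}}\sqrt{\varepsilon}$, and then obtain $\varepsilon_*(\omega)$ as the square of the positive root of the same quadratic in $\sqrt{\varepsilon}$. Your root $\frac{2\sqrt{2\pi^3}}{5}\big(\sqrt{(11\omega^3+5\omega)^2+\frac{5}{16}}-(11\omega^3+5\omega)\big)$ coincides with the paper's $\frac{\sqrt{8\pi^3}}{5}\big(\cdots\big)$, so the thresholds agree.
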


\begin{proof}
We deduce from \eqref{eq:Poincare} that
\[\vn{k-\kav}_\infty^2 \leq \frac{L}{2\pi}\|k_s\|_2^2, \quad \|k_s\|_2^2 \leq\frac{L^2}{4\pi^2}\|k_{ss}\|_2^2, \quad \|k_{ss}\|_2^2 \leq \frac{L^2}{4\pi^2}\|k_{sss}\|_2^2.
\]
These can be used to estimate the last three terms in Lemma \ref{LMksevol2}.
For example, for the first term,
\begin{align*}
    - 22\kav^3\int (k-\kav)k_s^2\,ds &\leq 22\left(\frac{2\omega\pi}{L}\right)^3\vn{k-\kav}_\infty\|k_s\|_2^2 \\
    & \leq 22\left(\frac{2\omega\pi}{L}\right)^3\sqrt{\frac{L}{2\pi}}\vn{k_s}_2\left(\frac{L^2}{4\pi^2}\right)^2\vn{k_{sss}}_2^2\\
    & = \frac{11\omega^3}{\sqrt{2\pi^3}}\sqrt{\varepsilon(t)}\|k_{sss}\|_2^2\,.
\end{align*}
Similarly, for the second and third terms,
\[
	 5\int (k-\kav)^2k_{ss}^2\,ds
	\le 5\vn{k-\kav}_\infty^2\|k_{ss}\|_2^2
	\leq \frac{5}{8\pi^3}\varepsilon(t)\vn{k_{sss}}_2^2\,,
\]
\[
	 10\kav\int (k-\kav)k_{ss}^2\,ds
	\le \frac{20\omega\pi}{L} \vn{k-\kav}_\infty \vn{k_{ss}}_2^2
	\le  \frac{5\omega}{\sqrt{2\pi^3}} \sqrt{\varepsilon(t)} \vn{k_{sss}}_2^2
\,.
\]
Applying the estimates to the result of Lemma \ref{LMksevol2} yields
\begin{align*}
	\frac{d}{dt} \int k_s^2\,ds
	\le -\left(\frac{1}{8}-\frac{5}{8\pi^3}\varepsilon(t)-\frac{11\omega^3+5\omega}{\sqrt{2\pi^3}}\sqrt{\varepsilon(t)}\right)\vn{k_{sss}}_2^2.
\end{align*}
Computing the positive root $x_0$ of $\frac{1}{16}-\frac{5}{8\pi^3}x^2-\frac{11\omega^3+5\omega}{\sqrt{2\pi^3}}x$, we find that if
$$\sqrt{\varepsilon(t)}\leq x_0=\frac{\sqrt{8\pi^3}}{5}\left(\sqrt{\big( 11\omega^3+5\omega \big)^2+\frac{5}{16}}- \big( 11\omega^3+5\omega \big)\right)\,,$$
then the prefactor of $\vn{k_{sss}}_2^2$ is bounded above by $-\frac{1}{16}$.
\end{proof}

Now we wish to study the flow under the specific geometric closeness condition
\begin{equation}
\label{EQgeomsmall}
\varepsilon(t) \le \varepsilon_*(\omega)
\,.
\end{equation}

The first immediate impact of working in the smallness regime is that we can obtain a sharp length estimate.
Recall that the known length bound without any smallness assumption is of the form
\[
\frac{2\pi^2}{E[\gamma_0]} \leq L(t) \leq L(0)+C\big(E[\gamma_0]\big)\,t.
\]
The lower bound simply follows by the decreasing property of $E(t):=E[\gamma(\cdot,t)]$, and the estimate $L(t)E(t)\geq 2\pi^2$ obtained by the Cauchy--Schwarz inequality and Fenchel's theorem.
The upper bound is obtained by Dziuk--Kuwert--Sch\"atzle \cite[(3.11)]{DKS02}.
The above length estimate is quite rough at least for our purpose, as all the known special solutions expand at rate $t^{\frac{1}{4}}$.
In the smallness regime, we can drastically improve the estimate.

\begin{prop}\label{prop:length_sharp_estimate}
Let $\sigma\geq0$.
Let $\gamma:\S^1\times[0,\infty)\rightarrow\R^2$ be a free elastic flow with $\varepsilon(0)\leq\sigma$.
Set $T_\sigma:=\inf\{ t\geq0 \mid \varepsilon(t)>\sigma\}\in[0,\infty]$.
Then, for all $t\in[0,T_\sigma)$,
\[
    |L(t)^4 - L(0)^4 - 32\omega^4\pi^4\,t | \leq \sigma \hat{C}(\sigma,\omega)\, t,
\]
where $\hat C(\sigma,\omega)$
is a quadratic polynomial in $\sqrt{\sigma}$ with positive coefficients. 
\end{prop}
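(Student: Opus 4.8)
The plan is to differentiate $L^4$ directly, show its time derivative stays within $\sigma\hat C(\sigma,\omega)$ of the constant circle rate $32\omega^4\pi^4$, and then integrate in time. First I would record the evolution of length. Using $ds_t = Fk\,ds$ (derived in the proof of Lemma \ref{LMksevol}) together with $F = k_{ss}+\tfrac12 k^3$ and one integration by parts,
\[
\frac{dL}{dt} = \int Fk\,ds = \int\Big(k_{ss} + \tfrac12 k^3\Big)k\,ds = -\int k_s^2\,ds + \frac12\int k^4\,ds\,.
\]
Since the turning number is a homotopy invariant, $\int k\,ds = \pm 2\omega\pi$ is constant along the flow, so $\kav = 2\omega\pi/L(t)$ with the explicit $L(t)$-dependence. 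Writing $k = (k-\kav)+\kav$ and using $\int (k-\kav)\,ds = 0$, I would expand
\[
\int k^4\,ds = \int(k-\kav)^4\,ds + 4\kav\int(k-\kav)^3\,ds + 6\kav^2\int(k-\kav)^2\,ds + \kav^4 L\,.
\]

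The last term is the \emph{circle term}: $\frac12\kav^4 L = 8\omega^4\pi^4/L^3$, and multiplying $\frac{dL}{dt}$ by $4L^3$ turns it into the constant $32\omega^4\pi^4$. This is precisely why $L^4$, rather than $L$, is the correct quantity to track: the leading contribution loses its $L$-dependence and becomes the exact expansion rate of the $\omega$-circle. Thus
\[
\frac{d}{dt}L^4 - 32\omega^4\pi^4 = 4L^3\Big(-\!\int k_s^2\,ds + \frac12\!\int(k-\kav)^4\,ds + 2\kav\!\int(k-\kav)^3\,ds + 3\kav^2\!\int(k-\kav)^2\,ds\Big)\,.
\]
The remaining task is to bound each of the four terms on the right by a power of $\varepsilon(t) = L^3\|k_s\|_2^2$ via the Poincar\'e--Wirtinger inequalities \eqref{eq:Poincare} applied to $k-\kav$. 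I expect the scalings to be: $4L^3\int k_s^2\,ds = 4\varepsilon$; for the quadratic term $12L^3\kav^2\int(k-\kav)^2\,ds \le 12\omega^2\varepsilon$ (using $\|k-\kav\|_2^2 \le \frac{L^2}{4\pi^2}\|k_s\|_2^2$ and $\kav^2 = 4\omega^2\pi^2/L^2$); the cubic term $\lesssim \omega\,\varepsilon^{3/2}$ (combining $\|k-\kav\|_\infty \le (L/2\pi)^{1/2}\|k_s\|_2$ with the $L^2$ estimate); and the quartic term $\lesssim \varepsilon^2$.

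Collecting these gives
\[
\Big|\frac{d}{dt}L^4 - 32\omega^4\pi^4\Big| \le (4+12\omega^2)\,\varepsilon + \frac{4\omega}{\pi\sqrt{2\pi}}\,\varepsilon^{3/2} + \frac{1}{4\pi^3}\,\varepsilon^2\,.
\]
On $[0,T_\sigma)$ we have $\varepsilon(t)\le\sigma$, so replacing each $\varepsilon$ by $\sigma$ and extracting one factor of $\sigma$ yields the bound $\sigma\hat C(\sigma,\omega)$ with $\hat C(\sigma,\omega) = (4+12\omega^2) + \frac{4\omega}{\pi\sqrt{2\pi}}\sqrt{\sigma} + \frac{1}{4\pi^3}\sigma$, a quadratic polynomial in $\sqrt{\sigma}$ with positive coefficients. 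Integrating the differential inequality over $[0,t]$ and applying the fundamental theorem of calculus then gives $|L(t)^4 - L(0)^4 - 32\omega^4\pi^4 t| \le \sigma\hat C(\sigma,\omega)\,t$.

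As for the main obstacle, there is no single hard analytic step: global existence (Theorem \ref{thm:global_existence}) makes the flow available on all of $[0,T_\sigma)$, and the differential identity is elementary. The real content lies in the bookkeeping that makes the estimate \emph{sharp} rather than merely qualitative. One must track the exact power of $\varepsilon$ produced by each error term so that the whole remainder carries a clean factor of $\sigma$ (instead of a vague $o(1)$), and one must verify that the coefficient of the leading $\kav^4 L$ term is exactly the circle rate $32\omega^4\pi^4$. The only mild care needed is that $\kav$ is time-dependent through $L(t)$, but its explicit form $2\omega\pi/L(t)$ means no extra derivative terms appear when it is carried through the estimates.
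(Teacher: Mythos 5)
Your proposal is correct and follows essentially the same route as the paper's own proof: differentiate $L^4$, isolate the exact circle rate $32\omega^4\pi^4$ via the average/average-free decomposition of $\int k^4\,ds$, bound the remainder terms through the Poincar\'e--Wirtinger inequalities \eqref{eq:Poincare}, and integrate on $[0,T_\sigma)$. Even your constants match (your $\frac{4\omega}{\pi\sqrt{2\pi}}$ equals the paper's $\sqrt{8\omega^2/\pi^3}$, and your $\hat C(\sigma,\omega)=(4+12\omega^2)+\frac{4\omega}{\pi\sqrt{2\pi}}\sqrt{\sigma}+\frac{1}{4\pi^3}\sigma$ is exactly the paper's $\hat C = C+4$), so there is nothing to add.
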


\begin{proof}
Using $k=(k-\kav)+\kav$, we calculate
\begin{align*}
    & \frac{d}{dt}(L(t)^4)\\
    &= 4L^3\int\, ds_t\\
    & = -4L^3\int k_s^2\,ds
        + 2L^3\int k^4\,ds \\
    &= -4\varepsilon(t) + 2L^3\int \Big( (k-\kav)^4 + 4(k-\kav)^3\kav + 6(k-\kav)^2\kav^2 + 4(k-\kav)\kav^3 + \kav^4 \Big)\,ds
\notag
\\  &= -4\varepsilon(t) + 2L^3\int \Big( (k-\kav)^4 + 4(k-\kav)^3\kav + 6(k-\kav)^2\kav^2\Big)\,ds
        + 32\omega^4\pi^4
\,.
\label{EQlengthevo}
\end{align*}
Now we use \eqref{eq:Poincare} to estimate the integral term:
\begin{align*}
    \Big| 2L^3\int (k-\kav)^4 &+ 4(k-\kav)^3\kav + 6(k-\kav)^2\kav^2\,ds \Big|\\
    &\le \Big(2L^3||k-\kav||_\infty^2 + 8L^3\kav||k-\kav||_\infty + 12L^3\kav^2\Big)\int (k-\kav)^2\,ds
   \notag \\
    &\le \Big(\frac{L^4}{\pi}||k_s||_2^2
        + 16L^2\omega\pi\sqrt{\frac{L}{2\pi}}||k_s||_2
        + 48\omega^2\pi^2L\Big)\frac{L^2}{4\pi^2}\int k_s^2\,ds
    \notag \\
    &\le \Big(\frac{1}{4\pi^3}\varepsilon(t)
        + \sqrt{\frac{8\omega^2}{\pi^3}}\sqrt{\varepsilon(t)}
        + 12\omega^2\Big) \varepsilon(t)\,.
\end{align*}
Letting
$C(\sigma,\omega):=\frac{1}{4\pi^3}\sigma
+ \sqrt{\frac{8\omega^2}{\pi^3}}\sqrt{\sigma}
+ 12\omega^2$,
using $\varepsilon(t)\leq\sigma$ and integrating give
\[
-\sigma\big(4+C(\sigma,\omega) \big)t \leq L(t)^4-L(0)^4 -32\omega^4\pi^4\,t \leq  \sigma C(\sigma,\omega)t.
\]
Letting $\hat{C}:=C+4$ finishes the proof.
\end{proof}

\begin{rmk} 
    For an $\omega$-circle, equality is achieved for all time for $\sigma=0$ and $T_\sigma=\infty$.
    Not only that, it turns out that any flow with bounded $\varepsilon(t)$ must have length growth $\lesssim t^\frac{1}{4}$, and in addition if $\varepsilon(t)$ remains small the length growth is exactly $\sim t^\frac{1}{4}$.
\end{rmk}

We use the sharp length upper bound to obtain a life-span estimate for ensuring the smallness of $\varepsilon(t)$.
For positive constants $0<\varepsilon<\sigma$, we define $\delta_*(\varepsilon,\sigma)>0$ to be the unique positive root of
\begin{equation}
\label{EQrootp}\nonumber
P(t) := \varepsilon
\Big(
1
+ 
  \big(
  \sigma\hat{C}(\sigma, \omega) 
  + 32\omega^4\pi^4\big)t
\Big)^{\frac34}
- \sigma\,,
\end{equation}
that is,
\begin{equation}\label{eq:def_delta}
    \delta_*(\varepsilon,\sigma)=\delta_*(\varepsilon,\sigma;\omega) := \frac{1}{\sigma\hat{C}(\sigma, \omega) 
  + 32\omega^4\pi^4} \left( \left(\frac{\sigma}{\varepsilon}\right)^\frac{4}{3}-1 \right)\,.
\end{equation}
How this quantity emerges can be seen immediately in the following proof.

\begin{lem}\label{lem:delta_*}
    Let $\sigma\in(0,\varepsilon_*(\omega)]$.
    Let $\gamma:\S^1\times[0,\infty)\rightarrow\R^2$ be a free elastic flow satisfying $\varepsilon(0)<\sigma$.
    Then, for all $t\in[0,L(0)^4\delta_*(\varepsilon(0),\sigma)]$,
    \[
    \varepsilon(t)\leq\sigma \leq \varepsilon_*(\omega).
    \]
\end{lem}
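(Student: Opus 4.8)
The plan is to run a continuity (bootstrap) argument resting on two facts already established: the monotonicity of $\int k_s^2\,ds$ in the smallness regime, and the sharp length upper bound. Set $T_\sigma:=\inf\{t\ge0\mid\varepsilon(t)>\sigma\}$, exactly as in Proposition \ref{prop:length_sharp_estimate}. Since the flow is smooth, $\varepsilon$ is continuous in $t$, and $\varepsilon(0)<\sigma$ forces $T_\sigma>0$; moreover on the whole interval $[0,T_\sigma)$ the smallness condition $\varepsilon(t)\le\sigma\le\varepsilon_*(\omega)$ holds by definition of $T_\sigma$. The entire statement then reduces to the single claim $T_\sigma\ge L(0)^4\delta_*(\varepsilon(0),\sigma)$.

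First I would extract monotonicity. On $[0,T_\sigma)$ the bound $\varepsilon\le\varepsilon_*(\omega)$ lets me invoke Lemma \ref{LMsmallnessks}, whose right-hand side is a sum of two manifestly non-positive terms; hence $\frac{d}{dt}\int k_s^2\,ds\le0$, so $\int k_s^2\,ds\le\varepsilon(0)/L(0)^3$ throughout $[0,T_\sigma)$. Next, since $\varepsilon(0)\le\sigma$, Proposition \ref{prop:length_sharp_estimate} applies verbatim on the same interval and yields $L(t)^4\le L(0)^4+\big(32\omega^4\pi^4+\sigma\hat{C}(\sigma,\omega)\big)t$. Combining the two gives, for every $t\in[0,T_\sigma)$,
\[
\varepsilon(t)=L(t)^3\int k_s^2\,ds\le\varepsilon(0)\Big(\tfrac{L(t)^4}{L(0)^4}\Big)^{3/4}\le\varepsilon(0)\Big(1+\tfrac{32\omega^4\pi^4+\sigma\hat{C}(\sigma,\omega)}{L(0)^4}\,t\Big)^{3/4}.
\]

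The definition \eqref{eq:def_delta} is precisely engineered so that, after the rescaling $t\mapsto t/L(0)^4$, the right-hand side above equals $\sigma$ at $t=L(0)^4\delta_*(\varepsilon(0),\sigma)$ and is strictly smaller for smaller $t$; this is the content of $P(\delta_*)=0$, since $1+\big(32\omega^4\pi^4+\sigma\hat{C}(\sigma,\omega)\big)\delta_*=(\sigma/\varepsilon(0))^{4/3}$. To close, I would argue by contradiction: suppose $T_\sigma<L(0)^4\delta_*(\varepsilon(0),\sigma)$. Then $T_\sigma<\infty$, so continuity forces $\varepsilon(T_\sigma)=\sigma$; yet letting $t\uparrow T_\sigma$ in the displayed estimate, and using $T_\sigma/L(0)^4<\delta_*$ together with $\varepsilon(0)<\sigma$, gives $\varepsilon(T_\sigma)<\varepsilon(0)(\sigma/\varepsilon(0))=\sigma$, a contradiction. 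Hence $T_\sigma\ge L(0)^4\delta_*(\varepsilon(0),\sigma)$, and the displayed bound—extended to the closed endpoint by continuity—delivers $\varepsilon(t)\le\sigma\le\varepsilon_*(\omega)$ on all of $[0,L(0)^4\delta_*(\varepsilon(0),\sigma)]$.

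The main obstacle is not any individual estimate, since both ingredients are already in hand, but the careful orchestration of the bootstrap. One must ensure the a priori bound is only ever invoked where $\varepsilon\le\sigma$ is already known, namely strictly inside $[0,T_\sigma)$, so that there is no circularity; and one must exploit the strict initial inequality $\varepsilon(0)<\sigma$ to turn the borderline equality at the critical time into a genuine contradiction. Finally, inverting the $3/4$-power to recover exactly the root $\delta_*$ of $P$ has to be matched precisely against the time-rescaling by $L(0)^4$, which is the reason the conclusion is stated on the interval $[0,L(0)^4\delta_*]$ rather than $[0,\delta_*]$.
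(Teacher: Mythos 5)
Your proposal is correct and matches the paper's proof essentially step for step: the same stopping time $T_\sigma$, monotonicity of $\int k_s^2\,ds$ from Lemma \ref{LMsmallnessks} in the smallness regime, the length upper bound of Proposition \ref{prop:length_sharp_estimate} giving $\varepsilon(t)\le\varepsilon(0)\big(1+L(0)^{-4}(\sigma\hat{C}+32\omega^4\pi^4)t\big)^{3/4}$, and the contradiction with the defining root property of $\delta_*$ at $t=T_\sigma$. Your added remarks (explicitly extracting $\int k_s^2\,ds\le\varepsilon(0)/L(0)^3$, and passing to the closed endpoint by continuity) are details the paper leaves implicit, not a different route.
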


\begin{proof}
    Let $T_\sigma:=\inf\{t\geq0 \mid \varepsilon(t)>\sigma\}\in(0,\infty]$.
    It suffices to prove that $T_\sigma\geq L(0)^4\delta_*(\varepsilon(0),\sigma)$.
    Suppose on the contrary that $T_\sigma< L(0)^4\delta_*(\varepsilon(0),\sigma)$.
    By $T_\sigma<\infty$ (and continuity), for all $t\in[0,T_\sigma]$, we have
    $
    \varepsilon(t)\leq \sigma\leq\varepsilon_*(\omega)
    $
    and
    $
    \varepsilon(T_\sigma)=\sigma.
    $
    By Lemma \ref{LMsmallnessks}, $\vn{k_s}_2^2$ is decreasing on $[0,T_\sigma]$ and thus, by Proposition \ref{prop:length_sharp_estimate},
    \begin{align*}
        \varepsilon(t) \leq \varepsilon(0)\frac{L(t)^3}{L(0)^3} \leq \varepsilon(0)\left(1 + L(0)^{-4}\big( \sigma \hat{C}(\sigma,\omega) + 32\omega^4\pi^4 \big) t\right)^\frac{3}{4}.
    \end{align*}
    Letting $t=T_\sigma$ and using $T_\sigma< L(0)^4\delta_*(\varepsilon(0),\sigma)$ imply that
    \[\sigma < \varepsilon
    \Big(
    1
    + 
      \big(
      \sigma\hat{C}(\sigma, \omega) 
      + 32\omega^4\pi^4\big)\delta_*(\varepsilon(0),\sigma)
    \Big)^{\frac34}\,,
    \]
    which contradicts the definition of $\delta_*(\varepsilon(0),\sigma)$.
\end{proof}

Finally we come to the most important estimate in this section.

\begin{prop}
\label{PRPkeyestimate}
There exist positive constants $\varepsilon_1,c_2>0$ depending only $\omega$ and a universal constant $c_1>0$ with the following property:
Let $\gamma:\S^1\times[0,\infty)\rightarrow\R^2$ be a free elastic flow with $\varepsilon(0) < \varepsilon_1$.
Let $T_{\varepsilon_1}:=\inf\{t\geq0 \mid \varepsilon(t) > \varepsilon_1\}\in(0,\infty]$.
Define
\[
\SQ(t) = \frac{\ds\int k_s^2\,ds}{\ds\bigg( \int k^2\,ds \bigg)^3}
\,.
\]
Then for $t\in[0,T_{\varepsilon_1})$,
\[
\SQ(t) \le \SQ(0)\left(1+\frac{c_2}{L(0)^4}\,t\right)^{-c_1}
        \,.
\]
\end{prop}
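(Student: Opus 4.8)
The plan is to control the logarithmic derivative of $\SQ$ and show it is bounded above by a negative multiple of $L^{-4}$; the sharp length growth of Proposition \ref{prop:length_sharp_estimate} then converts this differential inequality into the stated power-law decay. Writing $A:=\int k_s^2\,ds$ and $B:=\int k^2\,ds$, so that $\SQ=A/B^3$, I would start from
\[
\frac{d}{dt}\log\SQ = \frac{A'}{A} - 3\frac{B'}{B}.
\]
For the first term I use Lemma \ref{LMsmallnessks}, which applies because $\varepsilon(t)\le\varepsilon_1\le\varepsilon_*(\omega)$ on $[0,T_{\varepsilon_1})$, giving $\frac{A'}{A}\le -\frac{1}{16}\frac{\int k_{sss}^2\,ds}{A}-\frac{13}{6}\kav^4$. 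For the second term I first record the energy-type identity $B' = -2\int F^2\,ds\le 0$, obtained from $k_t=-(F_{ss}+Fk^2)$ and $ds_t=Fk\,ds$ after integrating by parts. Crucially, this makes $-3B'/B = 6\int F^2\,ds/B$ a \emph{positive} contribution that must be dominated by the gain $-\frac{13}{6}\kav^4$.

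The heart of the argument is estimating this positive term. Expanding $F=k_{ss}+\tfrac12k^3$ and integrating by parts yields
\[
\int F^2\,ds = \int k_{ss}^2\,ds - 3\int k^2k_s^2\,ds + \tfrac14\int k^6\,ds,
\]
and I discard the manifestly negative middle term. Using $B\ge \kav^2 L = 4\pi^2\omega^2/L$ together with $\int k_{ss}^2\,ds\le \frac{L^2}{4\pi^2}\int k_{sss}^2\,ds$, and recalling that $\varepsilon=L^3A$ renders the comparison scale-invariant, the contribution of $\int k_{ss}^2\,ds$ to $6\int F^2\,ds/B$ can be absorbed into the reserved term $-\frac{1}{16}\int k_{sss}^2\,ds/A$ once $\varepsilon_1$ is small. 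For the leading sextic term I apply the decomposition $k=(k-\kav)+\kav$ and the Poincaré inequalities \eqref{eq:Poincare} to $\phi:=k-\kav$: every term of $\int k^6\,ds$ and of $B$ carrying a factor of $\phi$ contains a power of $\|\phi\|_2^2\le\frac{L^2}{4\pi^2}A$ or $\|\phi\|_\infty^2\le\frac{L}{2\pi}A$, each $O(\sqrt\varepsilon)$ after normalisation against the leading $\kav^2L$ and $\kav^6L$. This gives $\frac{\tfrac14\int k^6\,ds}{B}=\tfrac14\kav^4\big(1+O(\sqrt\varepsilon)\big)$, hence $-3B'/B\le \tfrac32\kav^4\big(1+O(\sqrt\varepsilon)\big)$ modulo the absorbed term.

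Adding the two bounds, the $\kav^4$ coefficients total $-\frac{13}{6}+\frac32=-\frac23$, and choosing $\varepsilon_1=\varepsilon_1(\omega)$ small enough (and $\le\varepsilon_*(\omega)$) to beat the $O(\sqrt\varepsilon)$ errors produces the key inequality
\[
\frac{d}{dt}\log\SQ \le -\tfrac13\kav^4 = -\tfrac{16\pi^4\omega^4}{3}\,\frac{1}{L^4},
\]
where I used $\kav=\overline k$ and $|\overline k|=2\pi\omega/L$. Finally, Proposition \ref{prop:length_sharp_estimate} with $\sigma=\varepsilon_1$ gives $L(t)^4\le L(0)^4+c_2 t$ for $c_2:=32\omega^4\pi^4+\varepsilon_1\hat C(\varepsilon_1,\omega)$, so $L(t)^{-4}\ge (L(0)^4+c_2t)^{-1}$; integrating $\frac{d}{dt}\log\SQ\le -c\,(L(0)^4+c_2 t)^{-1}$ with $c:=\frac{16\pi^4\omega^4}{3}$ and exponentiating yields $\SQ(t)\le\SQ(0)\big(1+\frac{c_2}{L(0)^4}t\big)^{-c_1}$ with $c_1=c/c_2$, all constants depending only on $\omega$.

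The main obstacle is the sharp balancing in the crux step: decay of $\SQ$ hinges on the coefficient $\tfrac{13}{6}$ furnished by Lemma \ref{LMsmallnessks} being strictly larger than the $\tfrac32$ generated by the energy-dissipation term $6\int F^2\,ds/B$ — precisely why the choice $a_0=\tfrac{15}{8}>\tfrac{25}{16}$ in Lemma \ref{LMksevol2} was flagged as delicate — and on verifying that the perturbative errors of the curvature decomposition together with the $\int k_{ss}^2\,ds$ term are all genuinely absorbable under a single smallness threshold $\varepsilon_1(\omega)$.
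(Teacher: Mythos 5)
Your proposal is correct and follows essentially the same route as the paper's proof: the same use of Lemma \ref{LMsmallnessks}, the gradient-flow identity $\frac{d}{dt}\int k^2\,ds=-2\int F^2\,ds$, the integration-by-parts bound $\int F^2\,ds\le\int k_{ss}^2\,ds+\frac14\int k^6\,ds$, the average/average-free decomposition producing the decisive cancellation $-\frac{13}{6}+\frac32=-\frac23$ on the $\kav^4$ coefficient, and integration of the resulting differential inequality against the length upper bound of Proposition \ref{prop:length_sharp_estimate} with $\sigma=\varepsilon_1$. Your variations (logarithmic-derivative bookkeeping, which needs only the trivial remark that $\SQ\equiv0$ once the curve is a circle, a full decomposition of $\int k^6\,ds$ in place of the paper's partial one with weight $k^2$, and $\int k^2\,ds\ge\kav^2 L$ in place of $LE\ge2\pi^2$ for the absorption step) are cosmetic and do not change the argument.
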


\begin{proof}
We calculate, by using the gradient flow structure $\frac{d}{dt}E=-\|F\|_2^2$,
\begin{align*}
\SQ'(t)
    &= \frac{\ds \frac{d}{dt}\int k_s^2\,ds}{\ds\bigg( \int k^2\,ds \bigg)^3}
    + \frac{6\ds \int k_s^2\,ds\int F^2\,ds}{\ds\bigg( \int k^2\,ds \bigg)^4}
    \,.
\end{align*}
Suppose $\varepsilon_1 \leq \varepsilon_*(\omega)$ for the moment (we will fix $\varepsilon_1$ later), so that we may apply Lemma \ref{LMsmallnessks} to find for $t\in[0,T_{\varepsilon_1})$,
\begin{align*}
||k||_2^8\SQ'
\le
||k||_2^2\bigg(
-\frac1{16}\int k_{sss}^2\,ds
	 - \frac{13}{6}\kav^4 \int k_s^2\,ds
  \bigg)+ 6||k_s||_2^2||F||_2^2
\,.
\end{align*}
Integration by parts implies
\[
||F||_2^2
 = \int k_{ss}^2\,ds
  + \int k_{ss}k^3\,ds
  + \frac14\int k^6\,ds
\le \int k_{ss}^2\,ds
  + \frac14\int k^6\,ds
  \,.
\]
Substituting this in yields the estimate
\begin{align}
\notag
||k||_2^8\SQ'
&\le
||k||_2^2\bigg(
-\frac1{16}\int k_{sss}^2\,ds
	 - \frac{13}{6}\kav^4 \int k_s^2\,ds
  \bigg)
\\&\qquad 
  + 6||k_s||_2^2\int k_{ss}^2\,ds
  + \frac32||k_s||_2^2\int k^6\,ds
\,.
\label{EQkeyest1}
\end{align}
We will now systematically use \eqref{eq:Poincare} to estimate the terms with a positive coefficient and substitute the result back into \eqref{EQkeyest1}.

For the first term,
\begin{equation}
\label{EQkeyesttosub1}
6||k_s||_2^2\int k_{ss}^2\,ds
\le \frac{3\varepsilon(t)}{2\pi^2L}\int k_{sss}^2\,ds
\,.
\end{equation}

Now for the second term we prepare for the estimate by performing the average-free/average decomposition partially, on four powers of the integrand, to obtain
\begin{align*}
\frac32\int k^6\,ds
&= \frac32
    \int k^2\Big(
        (k-\kav)^4
        + 4(k-\kav)^3\kav
        + 6(k-\kav)^2\kav^2
        + 4(k-\kav)\kav^3
        + \kav^4
        \Big)\,ds
\\
&= 
    \frac32\int k^2
        (k-\kav)^4
        \,ds
    + 6\kav\int k^2
        (k-\kav)^3
        \,ds
    + 9\kav^2\int k^2
        (k-\kav)^2
        \,ds
\\&\qquad
    + 6\kav^3\int k^2
        (k-\kav)
        \,ds
+ \frac32\kav^4\int k^2\,ds
\,.
\end{align*}
Now, to estimate each of these terms we use \eqref{eq:Poincare} as in the proof of Lemma \ref{LMsmallnessks}:
\begin{equation}
\label{EQkeyesttosub2}
\frac32
    \int k^2
        (k-\kav)^4
        \,ds
\le \frac32||k-\kav||_\infty^4\,||k||_2^2
\leq \frac{3}{8\pi^2L^4}\varepsilon^2(t)\,||k||_2^2
\,,
\end{equation}
\begin{equation}
\label{EQkeyesttosub3}
6\kav\int k^2
        (k-\kav)^3
        \,ds
\le 6\frac{2\omega\pi}{L}||k-\kav||_\infty^3\,||k||_2^2
= \frac{6\omega}{\sqrt{2\pi}L^4} \varepsilon^{\frac32}(t)\,||k||_2^2
\,,
\end{equation}
\begin{equation}
\label{EQkeyesttosub4}
9\kav^2\int k^2
        (k-\kav)^2
        \,ds
\le 9\left(\frac{2\omega\pi}{L}\right)^2||k-\kav||_\infty^2\,||k||_2^2
\leq \frac{18\omega^2\pi}{L^4}\varepsilon(t)\,||k||_2^2
\,,
\end{equation}
\begin{equation}
\label{EQkeyesttosub5}
6\kav^3\int k^2
        (k-\kav)
        \,ds
\le 6\left(\frac{2\omega\pi}{L}\right)^3||k-\kav||_\infty\,||k||_2^2
\leq \frac{48\omega^3\pi^3}{\sqrt{2\pi}L^4}\varepsilon^{\frac12}(t)\,||k||_2^2
\,.
\end{equation}
For the last term, we can not estimate it; this must compete with our good term as-is.

Substituting \eqref{EQkeyesttosub1}--\eqref{EQkeyesttosub5} into \eqref{EQkeyest1} gives
\begin{align}
\notag
||k||_2^8\SQ'
&\le
\bigg(
 \frac{3\varepsilon(t)}{2\pi^2L}-\frac{||k||_2^2}{16}\bigg)\int k_{sss}^2\,ds
\\&\qquad 	
\notag
+
  \bigg( L^{-4}Q(\varepsilon(t))
  + \Big(\frac32 - \frac{13}{6}\Big)\kav^4\bigg)
  ||k||_2^2\int k_s^2\,ds
\\&=
\notag
L^{-1}\bigg(
 \frac{3\varepsilon(t)}{2\pi^2}-\frac{LE}{8}\bigg)\int k_{sss}^2\,ds
\notag
\\&\qquad 	
+
  L^{-4}\bigg( Q(\varepsilon(t))
  - \frac23(2\omega\pi)^4\bigg)
  ||k||_2^2\int k_s^2\,ds
\,.\notag
\end{align}
where
\[
Q(x)
    = \frac{3}{8\pi^2}x^2
    + \frac{6\omega}{\sqrt{2\pi}}x^{\frac32}
    + 18\omega^2\pi\,x
    + \frac{48\omega^3\pi^3}{\sqrt{2\pi}}x^{\frac12}\,.
\]

For the first term, since $LE \ge 2\pi^2$, we need
$\varepsilon(t) \le \frac{1}{6}\pi^4$ for the coefficient of the first term to be non-positive.
For the second, it only depends on the polynomial $Q$ (with $\omega$) and $\varepsilon(t)$.

We now fix $\varepsilon_1$ to be the minimum of $\varepsilon_*(\omega)$, $\frac{1}{6}\pi^4$, and the first positive root of $Q(x) - \frac1{15}(2\omega\pi)^4$.
Then
\[
||k||_2^8\SQ' \le -\frac35(16\omega^4\pi^4) L^{-4}||k||_2^2||k_s||_2^2\,,
\]
which implies
\[
\SQ(t) \le \SQ(0)\exp\bigg(
            -\frac35(16\omega^4\pi^4) \int_0^t L(\hat t)^{-4}\,d\hat t\bigg)
            \,.
\]
So now use the optimal upper bound of length in Proposition \ref{prop:length_sharp_estimate} to find
\begin{align*}
-\frac35(16\omega^4\pi^4) L^{-4}(t) 
&\le -\frac35(16\omega^4\pi^4) 
    \frac{1}{L(0)^4 + (\varepsilon_1\hat{C}(\varepsilon_1,\omega) + 32\omega^4\pi^4)\,t}
\\
&= -\frac{c_1}{c_2^{-1}L(0)^4+t},
\end{align*}
where we have set $c_1 = 48\omega^4\pi^4/(5c_2)$ and $c_2 = \varepsilon_1\hat C + 32\omega^4\pi^4$; in fact, since $\varepsilon_1\hat{C}(\varepsilon_1,\omega)$ is bounded by a quadratic polynomial in $\omega$, we may even replace $c_1$ with a universal positive constant.
Therefore,
\begin{align*}
    \SQ(t) &\le \SQ(0)\exp\Big(
        -c_1\log \big( c_2^{-1}L(0)^4+t \big)
        +c_1\log \big( c_2^{-1}L(0)^4 \big)\Big)\,,
\end{align*}
which is equal to the assertion after rearrangement.
\end{proof}


Then we will have preservation of \eqref{EQgeomsmall}, and in fact
$\varepsilon(t) \rightarrow 0$.

\begin{cor}
\label{CYpres}
There exists $\varepsilon_2\in(0,\varepsilon_1)$ depending only on $\omega$ with the following property:
Let $\gamma:\S^1\times[0,\infty)\rightarrow\R^2$ be a free elastic flow with $\varepsilon(0) \leq \varepsilon_2$.
Then for all $t\in[0,\infty)$ we have $\varepsilon(t)\le\varepsilon_1$, and moreover
\[
\varepsilon(t) \le
    c_3\left(1+\frac{t}{L(0)^4}\right)^{-c_1},
\]
where $c_3>0$ depends only on $\omega$.
In particular, $\varepsilon(t)\rightarrow0$ as $t\rightarrow\infty$.
\end{cor}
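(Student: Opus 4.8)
The plan is to transfer the decay of $\SQ(t)$ established in Proposition \ref{PRPkeyestimate} onto $\varepsilon(t)$ through a two-sided comparison of these two scale-invariant quantities, and then to upgrade the conditional smallness $\varepsilon\le\varepsilon_1$ to a bound valid for all time by a continuity (bootstrap) argument.

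First I would record the exact identity linking the quantities. Since $\int k^2\,ds = 2E$ and $\varepsilon = L^3\int k_s^2\,ds$, one has $\varepsilon(t) = 8\,(LE)^3\,\SQ(t)$, so everything reduces to controlling the product $LE$ from above and below. Writing $k=(k-\kav)+\kav$ and using $\int(k-\kav)\,ds=0$ together with $\kav^2=(2\pi\omega/L)^2$ (the turning number is preserved, since $\frac{d}{dt}\int k\,ds=-\int F_{ss}\,ds=0$) gives $\int k^2\,ds\ge \kav^2 L = 4\pi^2\omega^2/L$, i.e.\ $LE\ge 2\pi^2\omega^2$. For the upper bound, \eqref{eq:Poincare} applied to $k-\kav$ yields $\int(k-\kav)^2\,ds\le\frac{L^2}{4\pi^2}\int k_s^2\,ds=\varepsilon/(4\pi^2 L)$, whence $LE\le 2\pi^2\omega^2+\varepsilon/(8\pi^2)$. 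Thus, as long as $\varepsilon\le\varepsilon_1$, the product obeys $2\pi^2\omega^2\le LE\le M$ with $M:=2\pi^2\omega^2+\varepsilon_1/(8\pi^2)$ depending only on $\omega$.

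Next I would combine these facts on the interval $[0,T_{\varepsilon_1})$, where Proposition \ref{PRPkeyestimate} applies. Evaluating the identity at time $t$ gives $\varepsilon(t)\le 8M^3\SQ(t)$, while at time $0$ the lower bound gives $\SQ(0)\le\varepsilon(0)/(8(2\pi^2\omega^2)^3)$. Inserting the decay of $\SQ$ in between and setting $K:=(1+\varepsilon_1/(16\pi^4\omega^2))^3$ produces
\[
\varepsilon(t)\le K\,\varepsilon(0)\Big(1+\tfrac{c_2}{L(0)^4}t\Big)^{-c_1}
\qquad\text{for } t\in[0,T_{\varepsilon_1}),
\]
and since the bracket is $\le 1$ this in particular yields $\varepsilon(t)\le K\varepsilon(0)$ on that interval.

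Finally I would close the bootstrap by fixing $\varepsilon_2:=\varepsilon_1/(2K)\in(0,\varepsilon_1)$, which depends only on $\omega$. If $\varepsilon(0)\le\varepsilon_2$, the display forces $\varepsilon(t)\le\varepsilon_1/2$ on $[0,T_{\varepsilon_1})$. Were $T_{\varepsilon_1}<\infty$, continuity of $\varepsilon$ (the flow being smooth) would give both $\varepsilon(T_{\varepsilon_1})=\varepsilon_1$, from the definition of $T_{\varepsilon_1}$, and $\varepsilon(T_{\varepsilon_1})\le\varepsilon_1/2$, from the bound just derived — a contradiction. Hence $T_{\varepsilon_1}=\infty$, the estimate holds for all $t\ge0$, and the stated form $\varepsilon(t)\le c_3(1+t/L(0)^4)^{-c_1}$ follows after absorbing the constant $c_2$ into the prefactor (using monotonicity if $c_2\ge1$, or $(1+c_2\tau)^{-c_1}\le c_2^{-c_1}(1+\tau)^{-c_1}$ if $c_2<1$), with $c_3:=K\varepsilon_2\max\{1,c_2^{-c_1}\}$ depending only on $\omega$; since $c_1>0$ this gives $\varepsilon(t)\to0$. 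I expect the main obstacle to be precisely this circularity: the two-sided control of $LE$, and hence the whole decay estimate, is available only while $\varepsilon\le\varepsilon_1$, which is exactly what must be proved. The continuity argument resolves it, but only because the factor-of-two slack built into $\varepsilon_2$ makes the derived bound lie strictly below $\varepsilon_1$.
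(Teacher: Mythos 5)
Your proof is correct, and while your transfer step matches the paper, your closure of the bootstrap takes a genuinely different route. Like the paper, you convert the decay of $\SQ$ from Proposition \ref{PRPkeyestimate} into decay of $\varepsilon$ via $\varepsilon=8(LE)^3\SQ$, the bound $\SQ(0)\le\varepsilon(0)/\big(8(L(0)E(0))^3\big)$, and the upper bound $LE\le2\pi^2\omega^2+\varepsilon/(8\pi^2)$ (the paper records exactly this estimate in the remark following the corollary); your use of the sharper lower bound $LE\ge2\pi^2\omega^2$ --- justified by your correct observation that $\frac{d}{dt}\int k\,ds=-\int F_{ss}\,ds=0$, so $\kav=2\pi\omega/L$ is preserved --- yields a prefactor $K=(1+\varepsilon_1/(16\pi^4\omega^2))^3$ near $1$, whereas the paper, invoking only Fenchel's $L(0)E(0)\ge2\pi^2$ when bounding $\SQ(0)$, carries a prefactor of order $\omega^6$. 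The real divergence is in how $T_{\varepsilon_1}=\infty$ is forced: the paper does not use your one-shot continuity contradiction, but instead invokes the life-span estimate of Lemma \ref{lem:delta_*} (which rests on the sharp length bound of Proposition \ref{prop:length_sharp_estimate}), chooses $\varepsilon_2$ so small that the polynomial decay over the guaranteed interval $[0,L(0)^4\delta_*(\varepsilon_2,\varepsilon_1)]$ absorbs the large prefactor and returns $\varepsilon$ below $\varepsilon_2$ at the endpoint, and then iterates from that new initial time, using the uniform lower bound $L(t)\ge2\pi^2E(0)^{-1}$ so that the successive intervals exhaust $[0,\infty)$. Your argument --- set $\varepsilon_2:=\varepsilon_1/(2K)$, observe that the decayed bound keeps $\varepsilon\le\varepsilon_1/2$ on $[0,T_{\varepsilon_1})$, and contradict $\varepsilon(T_{\varepsilon_1})=\varepsilon_1$ if $T_{\varepsilon_1}$ were finite --- bypasses Lemma \ref{lem:delta_*} and the recursive covering entirely, and, as you yourself note, the factor-of-two slack means it would close even with the paper's larger $\omega$-dependent prefactor, so nothing in your route actually requires $K$ close to $1$. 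What the paper's route buys is explicit quantitative information on how long smallness persists from a given $\varepsilon(0)$ (the $\delta_*$ life-span), which has independent interest; what yours buys is brevity, an explicit and nearly optimal $\varepsilon_2$, and no recursion. Your endgame bookkeeping is also fine: the case split on $c_2\ge1$ versus $c_2<1$ is correct (and in fact $c_2=\varepsilon_1\hat{C}(\varepsilon_1,\omega)+32\omega^4\pi^4\ge1$ here), all constants depend only on $\omega$, and $c_1>0$ gives $\varepsilon(t)\to0$.
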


\begin{proof}
Using Proposition \ref{PRPkeyestimate} and the average-free/average decomposition, we estimate for $t\subset[0,T_{\varepsilon_1})$,
\begin{align*}
\varepsilon(t)
= L^3||k||_2^6\SQ(t) &\le \left(\frac{\varepsilon(t)}{8\pi^2} + 2\omega^2\pi^2\right)^3 \SQ(0) \left(1+\frac{c_2}{L(0)^4}\,t\right)^{-c_1}\\
&\le \left(\frac{\varepsilon_1}{8\pi^2} + 2\omega^2\pi^2\right)^3 \SQ(0) \left(1+\frac{c_2}{L(0)^4}\,t\right)^{-c_1}\,.
\end{align*}
Furthermore, noting that
$
\SQ(0)=\frac{\varepsilon(0)}{L(0)^3(2E(0))^3}\leq \frac{\varepsilon(0)}{64\pi^6},
$
we obtain
\begin{align*}
\varepsilon(t) \le \frac{\varepsilon(0)}{64\pi^6} \left( \frac{\varepsilon_1}{8\pi^2} + 2\omega^2\pi^2 \right)^3 \left(1+\frac{c_2}{L(0)^4}\,t\right)^{-c_1}
\,.
\end{align*}
Now, since $\delta_*(\varepsilon,\varepsilon_1)\nearrow\infty$ as $\varepsilon\searrow0$,
we can take small $\varepsilon_2\in(0,\varepsilon_1)$ depending only on $\omega$ such that
\[
\frac{1}{64\pi^6}\left(\frac{\varepsilon_1}{8\pi^2} + 2\omega^2\pi^2\right)^3\left(1+\frac{\delta_*(\varepsilon_2,\varepsilon_1)}{c_2}\right)^{-c_1}\leq 1\,,
\]
Since $L(0)^4\delta_*(\varepsilon_2,\varepsilon_1) \leq L(0)^4\delta_*(\varepsilon(0),\varepsilon_1) \leq T_{\varepsilon_1}$ by Lemma \ref{lem:delta_*}, we have
\[
\varepsilon(0)\leq\varepsilon_2 \quad \Longrightarrow \quad 
\begin{cases}
    \varepsilon\big (L(0)^4\delta_*(\varepsilon_2,\varepsilon_1) \big) \leq \varepsilon_2,\\
    \text{$\varepsilon(t)\leq \varepsilon_1$ for all $t\in[0,L(0)^4\delta_*(\varepsilon_2,\varepsilon_1)]$.}
\end{cases}
\]
This together with the uniform length lower bound
$L(t)\geq 2\pi^2E(0)^{-1}$
ensures that we can now recursively use this property to obtain for all $t\in[0,\infty)$,
\begin{align*}
\varepsilon(t) 
\le \varepsilon_1.
\end{align*}
Hence $T_{\varepsilon_1}=\infty$, so taking suitable $c_3$ completes the proof.
\end{proof}

Therefore the scale-invariant curvature must approach a constant.
An appropriate rescaling of the flow must approach a circle.
In fact, Corollary \ref{CYpres} shows already that $||k^\eta_s||_2^2(t)=\varepsilon(t)\rightarrow0$ for the rescaled flow $\eta$.

\begin{rmk}
    We also record the following sharp scale-invariant estimate:
    \[
    L(t)E(t) \le \frac{\varepsilon(t)}{8\pi^2} + 2\omega^2\pi^2
    \,.
    \]
    The proof follows easily by using the definition of $\varepsilon(t)$ and the average-free/average decomposition.
    This estimate combined with Proposition \ref{prop:length_sharp_estimate} implies that at least if $\varepsilon(t)$ remains small, then the elastic energy decays like $E(t)\lesssim t^{-\frac{1}{4}}$.
    The exponent $-\frac{1}{4}$ is optimal in view of \eqref{eq:length_sublinear}.
\end{rmk}

We continue our estimate to control the centre of the final circle.
From now on the lower bound part of the sharp length estimate in Proposition \ref{prop:length_sharp_estimate} comes into play.
For this reason, taking smaller $\varepsilon_1$ if necessary, we assume hereafter that
\[
\varepsilon_1\hat{C}(\varepsilon_1,\omega)\leq 16\omega^4\pi^4,
\]
so that the length lower bound also has a positive coefficient of $t$.

\begin{prop}
\label{PRestf}
Let $\gamma:\S^1\times[0,\infty)\rightarrow\R^2$ be a free elastic flow with $\varepsilon(0) < \varepsilon_2$.
Then for some $C>0$ depending on $\omega$ and $L(0)$,
\[
\|L(t)k(\cdot,t)-2\omega\pi\|_\infty \leq C\left(1+t\right)^{-c_1/2},
\]
and furthermore
\[
    ||k(\cdot,t)^3||_\infty \le 
    C (1+t )^{-(3+2c_1)/4} + \frac{
    8\omega^3\pi^3}{\Big( L(0)^4 + \big( 32\omega^4\pi^4-\varepsilon_1\hat{C}(\varepsilon_1,\omega) \big)\,t \Big)^{3/4} }\,.
\]
\end{prop}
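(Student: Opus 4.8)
The plan is to reduce both estimates to the curvature decomposition $k = \kav + (k-\kav)$, combined with the decay of $\varepsilon(t)$ from Corollary \ref{CYpres} and the two-sided length bound of Proposition \ref{prop:length_sharp_estimate}. The crucial preliminary observation is that the total curvature $\int k\,ds$ is conserved along the flow: since $k_t = -(F_{ss}+Fk^2)$ and $ds_t = Fk\,ds$, one computes $\frac{d}{dt}\int k\,ds = -\int F_{ss}\,ds = 0$, the last integral vanishing as the arc-length derivative of a function integrated over a closed curve. Hence $\int k\,ds$ keeps its initial value $\pm2\omega\pi$; fixing the orientation so that the signed turning number is $+\omega$, we obtain the exact identity $\kav = \frac{2\omega\pi}{L}$, i.e.\ $L\kav = 2\omega\pi$, for all $t$.

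Granting this, the first estimate is immediate. Writing $Lk-2\omega\pi = L(k-\kav)$ and applying the $L^\infty$ inequality in \eqref{eq:Poincare} to the zero-average function $k-\kav$ gives
\[
\|Lk-2\omega\pi\|_\infty^2 = L^2\|k-\kav\|_\infty^2 \le L^2\cdot\frac{L}{2\pi}\|k_s\|_2^2 = \frac{L^3\|k_s\|_2^2}{2\pi} = \frac{\varepsilon(t)}{2\pi}.
\]
Feeding in $\varepsilon(t) \le c_3(1+t/L(0)^4)^{-c_1}$ from Corollary \ref{CYpres} and absorbing the dependence on $L(0)$ and $\omega$ into $C$ yields $\|Lk-2\omega\pi\|_\infty \le C(1+t)^{-c_1/2}$.

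For the second estimate I would expand the cube about the average,
\[
k^3 = \kav^3 + 3\kav^2(k-\kav) + 3\kav(k-\kav)^2 + (k-\kav)^3,
\]
and bound each term in $L^\infty$. The leading term is explicit, $\kav^3 = (2\omega\pi/L)^3 = 8\omega^3\pi^3/L^3$, and the length lower bound $L(t)^4 \ge L(0)^4 + (32\omega^4\pi^4 - \varepsilon_1\hat C(\varepsilon_1,\omega))t$ of Proposition \ref{prop:length_sharp_estimate} (valid on all of $[0,\infty)$ since $T_{\varepsilon_1}=\infty$ by Corollary \ref{CYpres}) turns this into exactly the stated explicit term, the coefficient $32\omega^4\pi^4-\varepsilon_1\hat C$ being positive by the standing assumption $\varepsilon_1\hat C(\varepsilon_1,\omega)\le16\omega^4\pi^4$. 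For the three correction terms I would use the two rates $\kav = 2\omega\pi/L \lesssim (1+t)^{-1/4}$ and, from the display above, $\|k-\kav\|_\infty \le (2\pi)^{-1/2}L^{-1}\sqrt{\varepsilon(t)} \lesssim (1+t)^{-(1+2c_1)/4}$. A direct count of exponents then shows the slowest-decaying correction is $3\kav^2\|k-\kav\|_\infty \lesssim (1+t)^{-(3+2c_1)/4}$, while $3\kav\|k-\kav\|_\infty^2$ and $\|k-\kav\|_\infty^3$ decay strictly faster (by extra factors $(1+t)^{-c_1/2}$ and $(1+t)^{-c_1}$ respectively, using $c_1>0$); collecting them into a single constant $C$ gives the claim.

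The computations are routine; the only genuinely delicate points are bookkeeping ones. I expect the main thing to check is that the exponents combine to the sharp rate $-(3+2c_1)/4$ rather than something weaker, which hinges on the leading correction carrying exactly two powers of $\kav\sim t^{-1/4}$ and a single half-power of $\varepsilon$. The other point requiring care is uniformity in $t$: the length lower bound must have a strictly positive coefficient of $t$, which is precisely the reason for the assumption on $\varepsilon_1\hat C(\varepsilon_1,\omega)$ imposed just before the statement. Beyond matching these powers and handling the orientation sign in the conserved total curvature, I anticipate no substantive obstacle.
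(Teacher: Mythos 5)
Your proposal is correct and follows essentially the same route as the paper: the first bound is the identical estimate $\|Lk-2\omega\pi\|_\infty \le \sqrt{\varepsilon(t)/2\pi}$ combined with Corollary \ref{CYpres}, and your binomial expansion of $k^3$ about $\kav$ with the length lower bound of Proposition \ref{prop:length_sharp_estimate} is the paper's computation (the paper expands $(Lk)^3$ and divides by $L^3$, which is the same bookkeeping). Your explicit verification that $\int k\,ds$ is conserved, via $\frac{d}{dt}\int k\,ds = -\int F_{ss}\,ds = 0$, is a fact the paper uses implicitly through $\kav = 2\omega\pi/L$, and your exponent count matching $-(3+2c_1)/4$ is exactly right.
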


\begin{proof}
The first assertion follows from Corollary \ref{CYpres} with the simple estimate
\[
\vn{Lk-2\omega\pi}_\infty
= L\vn{k - \kav}_\infty
\le \sqrt{\frac{\varepsilon(t)}{2\pi}}.
\]
Similarly, using also Corollary \ref{CYpres},
\begin{align*}
\vn{L(t)^3k(\cdot,t)^3}_\infty
&=
    \vn{ (Lk-L\kav)^3
    + 3L\kav(Lk-L\kav)^2
    + 3L^2\kav^2(Lk-L\kav)
    + L^3\kav^3 }_\infty
\\
&\le
    (\varepsilon(t)/2\pi)^{\frac32}
    + 3(2\omega\pi)(\varepsilon(t)/2\pi)
    + 3(2\omega\pi)^2(\varepsilon(t)/2\pi)^{\frac12}
    + (2\omega\pi)^3\,
\\
&\le
    \sqrt{\varepsilon(t)}(\varepsilon_1/2\pi
    + 3(2\omega\pi)(\sqrt{\varepsilon_1}/2\pi)
    + 3(2\omega\pi)^2(1/2\pi)^{\frac12})
    + (2\omega\pi)^3
\\
&\le
    C(1+t)^{-c_1/2}
    + 8\omega^3\pi^3
\,.
\end{align*}
The lower length bound in Proposition \ref{prop:length_sharp_estimate} (for $\sigma=\varepsilon_1$) with Corollary \ref{CYpres} gives
\[
    L(t)^{-3} \le (L(0)^4 + (32\omega^4\pi^4-\varepsilon_1\hat{C}(\varepsilon_1,\omega))\,t)^{-3/4}
    \,.
\]
Combining the previous two estimates gives the result.
\end{proof}

\begin{lem}
\label{LMkssest}
Let $\gamma:\S^1\times[0,\infty)\rightarrow\R^2$ be a free elastic flow with $\varepsilon(0) < \varepsilon_2$.
Then for any integer $m\geq1$,
\[
    ||k_{s^m}||_\infty
    \le C(1+t)^{-(m+1+c_1)/4}
\,,
\]
where $C$ depends on $m$, $\omega$, $L(0)$, and $||k_{s^{2m+1}}||_2^2(0)$.
\end{lem}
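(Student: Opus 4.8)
The plan is to reduce the claimed $L^\infty$ decay to the two facts already established — the decay $\varepsilon(t)\lesssim(1+t)^{-c_1}$ of Corollary \ref{CYpres} and the sharp length growth $L(t)\sim t^{1/4}$ of Proposition \ref{prop:length_sharp_estimate} — plus a single new ingredient, namely a sharp scale-invariant bound on one high derivative. The point is an interpolation that trades the smallness of $k_s$ against mere boundedness of $k_{s^{2m+1}}$. Since $k_s$ and $k_{s^m}$ have zero average, Plancherel and Cauchy--Schwarz give the log-convexity estimate
\[
\vn{k_{s^{m+1}}}_2^2 \le \vn{k_s}_2\,\vn{k_{s^{2m+1}}}_2,
\]
while the $L^\infty$ inequality in \eqref{eq:Poincare} applied to $k_{s^m}$ gives $\vn{k_{s^m}}_\infty^2 \le \frac{L}{2\pi}\vn{k_{s^{m+1}}}_2^2$. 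Combining these and inserting $\vn{k_s}_2^2=\varepsilon(t)/L^3$ yields
\[
\vn{k_{s^m}}_\infty^2 \le \frac{1}{2\pi}\Big(\frac{\varepsilon(t)}{L}\Big)^{1/2}\vn{k_{s^{2m+1}}}_2.
\]
Thus, once I show the scale-invariant bound $\vn{k_{s^{2m+1}}}_2^2\le C L^{-(4m+3)}$, substituting it together with $\varepsilon(t)\lesssim(1+t)^{-c_1}$ and $L\sim t^{1/4}$ produces precisely $\vn{k_{s^m}}_\infty\le C(1+t)^{-(m+1+c_1)/4}$. This explains the exponent, the genuine decay factor $\varepsilon^{1/4}\sim(1+t)^{-c_1/4}$, and the dependence of $C$ on $\vn{k_{s^{2m+1}}}_2(0)$.

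It remains to prove the top-order bound, equivalently the uniform-in-time boundedness of the scale-invariant energies $\hat E_j(t):=L^{2j+1}\int k_{s^j}^2\,ds$ for $1\le j\le 2m+1$. I would obtain this by an energy estimate at arbitrary order, in the spirit of Lemma \ref{LMksevol} and Proposition \ref{PRPkeyestimate}. Differentiating $\int k_{s^j}^2\,ds$ in time, the commutator $[\partial_t,\partial_s]=-Fk\,\partial_s$ and $k_t=-(F_{ss}+Fk^2)$ produce the leading dissipation $-2\int k_{s^{j+2}}^2\,ds$ together with a remainder consisting of integrated products of derivatives of $k$ of fixed total order. The genuinely higher-order (cubic and quintic) remainder terms I would control with the Dziuk--Kuwert--Sch\"atzle interpolation inequalities \cite{DKS02}: each carries a positive power of $\vn{k}_\infty\lesssim L^{-1}$ or of $\varepsilon(t)$, so in the regime $\varepsilon(t)\le\varepsilon_1$ (guaranteed by Corollary \ref{CYpres}) they absorb into a small fraction of $\int k_{s^{j+2}}^2\,ds$ via \eqref{eq:Poincare}. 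Passing to $\hat E_j$ and inserting the sharp length asymptotics, I expect this to close into a differential inequality of the schematic form $\frac{d}{dt}\hat E_j \le L^{-4}\big(-c\,\hat E_j + M_j\big)$ with $c>0$ and $M_j$ bounded in terms of the lower-order energies, which (since $\int^\infty L^{-4}\,dt=\infty$) propagates boundedness up the hierarchy $j=1,\dots,2m+1$; the constant for the top energy then depends on $\hat E_{2m+1}(0)\sim L(0)^{4m+3}\vn{k_{s^{2m+1}}}_2^2(0)$.

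The main obstacle is exactly this top-order estimate, and inside it the competition between dissipation and the terms generated by length growth. In contrast to the higher-order remainder, the contributions coming from the evolving measure $ds_t=Fk\,ds$ and the curvature term $\tfrac12\int k^4k_{s^j}^2\,ds$ are of the \emph{same} scale-invariant order as the dissipation and are not small; for large $j$ and $\omega$ a crude use of \eqref{eq:Poincare} (which only gives $\hat E_{j+2}\ge 16\pi^4\hat E_j$) leaves the coefficient $c$ nonnegative and so fails to close. This is precisely the difficulty already met at first order in Proposition \ref{PRPkeyestimate}, where a favourable numerical cancellation (the coefficient $\tfrac32-\tfrac{13}{6}<0$) rescued the sign. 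I expect the analogous higher-order computation, carried out on the scale-invariant energies and using the already-proven sharp length growth, to yield the same good sign, and I would organise the whole estimate as a simultaneous induction on $j$ so that the bounded lower-order energies supply the forcing $M_j$ at each stage. Once $\hat E_{2m+1}$ is bounded, the interpolation of the first paragraph delivers the lemma.
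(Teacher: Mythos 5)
Your first paragraph reproduces exactly the paper's concluding interpolation: the paper likewise chains $\vn{k_{s^m}}_\infty^2 \le \frac{L}{2\pi}\vn{k_{s^{m+1}}}_2^2 \le \frac{L}{2\pi}\vn{k_s}_2\vn{k_{s^{2m+1}}}_2$ with Corollary \ref{CYpres} and Proposition \ref{prop:length_sharp_estimate}, and the bound you isolate, $\vn{k_{s^{2m+1}}}_2^2\le CL^{-(4m+3)}$, is (in scale-invariant form) the paper's intermediate estimate $\int k_{s^\ell}^2\,ds\le C(1+t)^{-(2\ell+1)/4}$ at $\ell=2m+1$. But that top-order bound is precisely where your proposal has a genuine gap: in your second and third paragraphs you do not prove it. You set up an induction on the scale-invariant energies $\hat E_j$ whose closure hinges on a favourable sign at every order $j\le 2m+1$ --- the analogue of the coefficient $\frac32-\frac{13}{6}<0$ in Proposition \ref{PRPkeyestimate} --- and you explicitly state only that you \emph{expect} this cancellation to persist. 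Nothing in the argument verifies it for general $j$, and, as you yourself note, the crude use of \eqref{eq:Poincare} fails for large $\omega$ since $\kav^4\sim 16\pi^4\omega^4/L^4$ dominates the Poincar\'e gain $16\pi^4/L^4$. As written, the proof does not close.

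The gap is avoidable, and the paper's proof shows how: it simply quotes the Dziuk--Kuwert--Sch\"atzle interpolation machinery \cite[(3.6), (2.16)]{DKS02}, which yields, unconditionally (no smallness assumption and no sign structure), the inequality $\frac{d}{dt}\int k_{s^\ell}^2\,ds + \int k_{s^{\ell+2}}^2\,ds \le C_\ell\vn{k}_2^{4\ell+10}$ for every $\ell\ge1$. The terms you single out as obstructions --- $\int k^4k_{s^j}^2\,ds$ and the contributions of $ds_t=Fk\,ds$ --- indeed have the same scaling as the dissipation, but they have strictly lower differential order, so Gagliardo--Nirenberg interpolation absorbs them into a fraction of $\int k_{s^{\ell+2}}^2\,ds$ at the cost of a possibly large but harmless constant $C_\ell$ in the forcing $\vn{k}_2^{4\ell+10}$; no delicate coefficient is needed because one is no longer trying to extract decay from the linearised structure. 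The decay comes instead from estimates already in hand at this stage: the curvature bound $\vn{k}_\infty\le CL^{-1}$ (Proposition \ref{PRestf}) gives $\vn{k}_2^{4\ell+10}\le CL^{-(2\ell+5)}\le C(1+t)^{-(2\ell+5)/4}$ via the length lower bound, whence $\int k_{s^\ell}^2\,ds\le C(1+t)^{-(2\ell+1)/4}$, and your own first paragraph then finishes the lemma. The sign miracle was essential in Proposition \ref{PRPkeyestimate} only because no curvature smallness or decay was available there; here it is available, and brute-force interpolation suffices. Replacing your conjectural energy hierarchy by this citation of \cite{DKS02} makes your argument correct and essentially identical to the paper's.
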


\begin{proof}
    We apply Dziuk--Kuwert--Sch\"atzle's interpolation arguments (as in \cite[(3.6), (2.16)]{DKS02}) to deduce for all $\ell\geq1$,
    \[
    \frac{d}{dt}\int k_{s^\ell}^2\,ds
     + \int k_{s^{\ell+2}}^2\,ds
     \le C_\ell||k||_2^{4\ell+10}
    \,.
    \]
    Using the curvature estimate $\vn{k}_\infty \leq CL^{-1}$ and then the length estimate, we have
    \[
    \frac{d}{dt}\int k_{s^\ell}^2\,ds
     \le CL^{-(2\ell+5)}
     \le \frac{C}{(L(0)^4 + (32\omega^4\pi^4-\varepsilon_1\hat{C}(\varepsilon_1,\omega))\,t)^{(2\ell+5)/4}}
        \,.
    \]
    (Here and hereafter $C$ will change line by line.)
    This implies
    \[
    \int k_{s^\ell}^2\,ds
    \le
    \frac{C}{(1+t)^{(2\ell+1)/4}}
    \,,
    \]
    where $C$ depends additionally on $L(0)$ and $||k_{s^\ell}||_2^2(0)$.
    
    Now, combining the above estimates (for $\ell=2m+1$) with our previous estimates, namely \eqref{eq:Poincare}, Proposition \ref{prop:length_sharp_estimate} (implying $L^4\geq C(1+t)$), and Corollary \ref{CYpres}, we find
    \begin{align*}
    ||k_{s^m}||_\infty^2
    &\le \frac{L}{2\pi}||k_{s^{m+1}}||_2^2
    \\
    &\le \frac{L}{2\pi}||k_{s}||_2||k_{s^{2m+1}}||_2
    \\
    &\le CL^{-1/2}\varepsilon(t)^{1/2}(1+t)^{-(4m+3)/8}
    \\
    &\le C(1+t)^{(-1/2)(1/4)+(1/2)(-c_1)-(4m+3)/8} \\
    &= C(1+t)^{-(m+1+c_1)/2}
    \end{align*}
    as required.
\end{proof}

Now we conclude with our desired convergence result.

\begin{proof}[Proof of Theorem \ref{thm:stability}]
Take $\varepsilon:=\varepsilon_2$ as in Corollary \ref{CYpres}.
The curvature convergence from Proposition \ref{PRestf} implies that the image $\eta(\S^1,t)$ is becoming closer to an $\omega$-circle with radius $\frac{1}{2\omega\pi}$.
In addition, the convergence is smooth since Lemma \ref{LMkssest} with Proposition \ref{prop:length_sharp_estimate} implies the decay of higher order derivatives of rescaled curvature:
\[
\|k^\eta_{s^m}\|_\infty = L^{m+1}\|k_{s^m}\|_\infty \leq C(1+t)^{-c_1/4} \to0
\]
as $t\to\infty$.
Hence, the continuous translation
\[
\eta(\cdot,t) - \int_{\S^1} \eta(\cdot,t)\,ds
\]
clearly has the claimed convergence property.

In order to do without the continuous translation, it is sufficient to prove that the limit image is confined in the desired disk; that is,
\[
\limsup_{t\to\infty} \vn{\eta(\cdot,t)}_\infty \leq \frac{1}{2\omega\pi}.
\]
We prove this by giving more estimates on the rescaled flow.
From the evolution equation, Proposition \ref{PRestf}, and Lemma \ref{LMkssest} (for $m=1$), we have
\begin{align*}
|\gamma|(\cdot,t)
&\le |\gamma|(\cdot,0)
    + \int_0^t |F|(\cdot,\hat t)\,dt
\\
&\le \vn{\gamma}_\infty(0)
    + \int_0^t \vn{k_{ss}}_\infty(\hat{t}) \,d\hat{t} + \frac{1}{2}\int_0^t \vn{k^3}_\infty(\hat{t}) \,d\hat{t}
\\
&\le \vn{\gamma}_\infty(0)
    + C(1+t)^{(1-c_1)/4} + C(1+t)^{(1-2c_1)/4}
\\&\qquad
    + \frac{16\omega^3\pi^3}{32\omega^4\pi^4-\varepsilon_1\hat{C}(\varepsilon_1,\omega)}
    \Big( \big( L(0)^4 + ( 32\omega^4\pi^4-\varepsilon_1\hat{C}(\varepsilon_1,\omega) )\,t \big)^{1/4}
     - L(0)\Big)
\,.
\end{align*}
Multiplying through by $1/L$ and using Proposition \ref{prop:length_sharp_estimate}, we find
\begin{align*}
|\eta|(\cdot,t)
&\le \vn{\gamma}_\infty(0)L(t)^{-1}
    + C(1+t)^{-c_1/4} + C(1+t)^{-2c_1/4}
\\&\qquad
    + \frac{16\omega^3\pi^3}{32\omega^4\pi^4-\varepsilon_1\hat{C}(\varepsilon_1,\omega)}
    \frac{\big( L(0)^4 + (32\omega^4\pi^4-\varepsilon_1\hat{C}(\varepsilon_1,\omega))\,t \big)^{1/4}
     - L(0)}
     {(L(0)^4 + \big( 32\omega^4\pi^4-\varepsilon_1\hat{C}(\varepsilon_1,\omega))\,t \big)^{1/4}}
\,.
\end{align*}
Therefore,
\[
|\eta|(\cdot,t)
\le \frac{16\omega^3\pi^3}{32\omega^4\pi^4-\varepsilon_1\hat{C}(\varepsilon_1,\omega)}
+ f(t)\,,
\]
where $f(t)\rightarrow0$ as $t\to\infty$.
This already gives an effective boundedness for $\eta$.

Now we use the decay estimate for $\varepsilon(t)$ in Corollary \ref{CYpres} to redefine $\varepsilon_1$.
More precisely, applying the above argument to the new initial time $t=M$, and also new $\varepsilon_1=\varepsilon_1(M)$ such that $\varepsilon_1(M)\to0$ as $M\to\infty$, we further have
\[
|\eta|(\cdot,t+M)
\le \frac{16\omega^3\pi^3}{32\omega^4\pi^4-\varepsilon_1(M)\hat{C}(\varepsilon_1(M),\omega)}
+ f_M(t)\,,
\]
where $f_M$ still has the property that $f_M(t)\to0$ as $t\to\infty$.
Taking $t\to\infty$ gives
\[
\limsup_{t\to\infty}\vn{\eta(\cdot,t)}_\infty=\limsup_{t\to\infty}\vn{\eta(\cdot,t+M)}_\infty
\le \frac{16\omega^3\pi^3}{32\omega^4\pi^4-\varepsilon_1(M)\hat{C}(\varepsilon_1(M),\omega)}\,,
\]
and then taking $M\to\infty$ implies
\[
\limsup_{t\to\infty}\vn{\eta(\cdot,t)}_\infty
\le \frac{16\omega^3\pi^3}{32\omega^4\pi^4} = \frac{1}{2\omega\pi}\,.
\]
The proof is now complete.
\end{proof}

\bibliographystyle{plain}
\bibliography{FEF}

\end{document}